\let\accentvec\vec
\let\vec\accentvec
\newtheorem{assumption}[theorem]{Assumption}
\newcommand{\setA}{\mathcal{A}} % Semialgebraic functions
\newcommand{\setD}{\mathcal{D}} % Dictionnary of Univariate transcendental functions
\newcommand{\cB}{\mathcal{B}}
\newcommand{\cF}{\mathcal{F}}
\newcommand{\funi}{u}
\definecolor{darkgreen}{rgb}{0.0 0.5 0.0}
\definecolor{whitegreen}{rgb}{0.0 0.75 0.0}
\definecolor{whiteblue}{rgb}{0.0 0.0 1.2}
\definecolor{darkred}{rgb}{0.8 0.0 0.0}
\definecolor{dkblue}{rgb}{0,0.1,0.5}
\definecolor{lightblue}{rgb}{0,0.5,0.5}
\definecolor{dkgreen}{rgb}{0,0.4,0}
\definecolor{dk2green}{rgb}{0.4,0,0}
\definecolor{dkviolet}{rgb}{0.6,0,0.8}
\definecolor{dkpink}{rgb}{0.8,0,0.9}
\newcommand{\N}{\mathbb{N}}					% L'ensemble des entiers naturels
\newcommand{\R}{\mathbb{R}}					% Le corps des nombres reels
\newcommand{\Rbar}{\overline{\R}}	
\newcommand{\etc}{\textit{etc}~}
\newcommand{\ie}{\textit{i.e.}~}
\newcommand{\st}{s.t.~}
\newcommand{\resp}{resp.~}
\newcommand{\Snn}[1]{\mathcal{S}_{#1}} 
\newcommand{\Sn}{\Snn{n}}
\newcommand{\diag}{\text{diag }}
\renewcommand{\geq}{\geqslant}
\renewcommand{\leq}{\leqslant}
\renewcommand{\succeq}{\succcurlyeq}
\newcommand{\mons}[2]{\N_{#1}^{#2}}
\newcommand{\alphab}{\boldsymbol{\alpha}}
\newcommand{\npop}{n_\text{pop}}
\newcommand{\tlsc}{t^{\text{lsc}}}
\newcommand{\xopt}[7]{\xb_{#1} := (#2, #3, #4, #5, #6, #7)}
\newcommand{\fsa}{f_{\text{sa}}}
\newcommand{\fpop}{f_{\text{pop}}}
\newcommand{\Kpop}{K_{\text{pop}}}
\newcommand{\Xopt}{\mathbf{x}_\mathit{opt}}
\newcommand{\Xsdp}{\mathbf{x}_\mathit{sdp}}
\newcommand{\bop}{\mathtt{bop}}
\newcommand{\nbb}{\#{\text{boxes}}}
\newcommand{\Ksa}{K_{\mbox{\scriptsize sa}}}
\newcommand{\linf}{\text{sup}}
\newcommand{\lone}{L_{1}}
\numberwithin{equation}{section}
\DeclareMathOperator{\parab}{par}
\newcommand{\hessI}{\widetilde{\mathcal{D}^2(f)}}
\newcommand{\mI}[2]{\dfrac{#1+#2}{2}}
\newcommand{\dI}[2]{\dfrac{#1-#2}{2}}
\newcommand{\lmin}{\lambda_{\min}}
\newcommand{\lmax}{\lambda_{\max}}
\newcommand{\linfnorm}[1]{\Vert #1 \Vert_{\infty}}
\newcommand{\lonenorm}[1]{\Vert #1 \Vert_1}
\newcommand{\lonenormK}[1]{\|#1\|_{L_1(K)}}%SG warning: really need to specify K
\newcommand{\ltwonorm}[1]{\Vert #1 \Vert_2}
\newcommand{\comp}{\circ}
\newcommand{\ocaml}{\text{\sc OCaml}}
\newcommand{\intsolver}{\tt{intsolver}}
\newcommand{\sollya}{\mathtt{Sollya}}
\newcommand{\sparsepop}{\text{\sc SparsePOP}}
\newcommand{\precision}{p}
\newcommand{\dmax}{d_{\max}}
\newcommand{\itermax}{iter_{\max}}
\newcommand{\iter}{iter}
\newcommand{\updateprecision}{\mathtt{update\_precision}}
\newcommand{\guessargmin}{\mathtt{guess\_argmin}}
\newcommand{\argmin}{\mathtt{argmin}}
\newcommand{\randeval}{\mathtt{randeval}}
\newcommand{\astroot}{\mathtt{root}}
\newcommand{\composebop}{\mathtt{compose\_bop}}
\newcommand{\infnorm}{\mathtt{infnorm}}
\newcommand{\remez}{\mathtt{remez}}
\newcommand{\Df}{\mathcal{D}(f)}
\newcommand{\Hf}{\mathcal{D}^2(f)}
\newcommand{\Hphi}{\mathcal{D}^2(\phi)}
\newcommand{\dijlo}{\underline{d_{i j}}}
\newcommand{\dijup}{\overline{d_{i j}}}
\newcommand{\optim}{\mathtt{optim}}
\newcommand{\composeapprox}{\mathtt{compose\_approx}}
\newcommand{\saapprox}{\mathtt{reduce\_lift}}
\newcommand{\saapproxfun}[5]{\mathtt{reduce\_lift} (#1, #2, #3, #4, #5)}
\newcommand{\uopapprox}{\mathtt{unary\_approx}}
\newcommand{\dirset}{\mathit{D}}
\newcommand{\precset}{\mathcal{P}}
\newcommand{\tmalgofun}[3]{\mathtt{build\_quadratic\_form} (#1, #2, #3)}
\newcommand{\lift}{n_{\text{lifting}}}
\newcommand{\nt}{n_{\setD}}
\newcommand{\templateapproxfun}[4]{\mathtt{template\_approx} (#1, #2, #3, #4)}
\newcommand{\templateoptim}{\mathtt{template\_optim}}
\newcommand{\templateapprox}{\mathtt{template\_approx}}
\newcommand{\minsa}{\mathtt{min\_sa}}
\newcommand{\maxsa}{\mathtt{max\_sa}}
\newcommand{\xb}{\mathbf{x}} 
\newcommand{\yb}{\mathbf{y}} 
\newcommand{\zb}{\mathbf{z}}
\newcommand{\xc}{\mathbf{x}_c}
\newcommand{\Psa}{P^{\text{sa}}}
\newcommand{\omegatilde}[1]{\tilde{\omega}_{#1}}
\DeclareMathOperator{\radx}{rad}
\DeclareMathOperator{\sqrtop}{sqrt}
\newcommand{\radxtwo}{\radx_2}
\newcommand{\ext}[1]{\langle #1\rangle^{\text{sa}}} 
\newcommand{\dsa}{\ext{\setD}}
\begin{document}

\title{Certification of Real Inequalities%\thanks{Grants or other notes
%about the article that should go on the front page should be
%placed here. General acknowledgments should be placed at the end of the article.}
}
\subtitle{Templates and Sums of Squares}

\titlerunning{Certification of Real Inequalities -- Templates and Sums of Squares}        % if too long for running head

\author{Xavier Allamigeon \and St\'ephane Gaubert \and Victor Magron \and Benjamin Werner
}

\institute{X. Allamigeon \at
              INRIA and CMAP \'Ecole Polytechnique \\
              Tel.: +33 (0)1 69 33 46 32\\
              \email{xavier.allamigeon@inria.fr}           %  \\
             \and
             S. Gaubert \at
              INRIA and CMAP \'Ecole Polytechnique \\
              Tel.: +33 (0)1 69 33 46 13\\
              \email{Stephane.Gaubert@inria.fr}
             \and
             V. Magron\at
              INRIA and LIX/CMAP \'Ecole Polytechnique \\
              Tel.: +33 (0)1 69 35 69 85\\
              \email{victor.magron@inria.fr}
              \and
              B. Werner \at
              LIX \'Ecole Polytechnique \\
              Tel.: +33 (0)1 69 33 41 41\\
              \email{benjamin.werner@polytechnique.edu}
}

\date{Received: date / Accepted: date}
\authorrunning{Xavier Allamigeon, St\'ephane Gaubert, Victor Magron and Benjamin Werner} % if too long for running head
% The correct dates will be entered by the editor
\maketitle

\begin{abstract}
We consider the problem of certifying lower bounds for real-valued
multivariate transcendental functions. The functions we are dealing with are nonlinear and involve semialgebraic operations as well as some transcendental functions like $\cos$, $\arctan$, $\exp$, etc.
Our general framework is to use different approximation methods to relax the original problem into polynomial optimization problems, which we solve by sparse sums of squares relaxations. 
In particular, we combine the ideas of the maxplus approximations (originally introduced in optimal control) and of the linear templates (originally introduced
% by Manna et al. %SG deleted ref to Manna by symmetry with McEneaney
in static analysis by abstract interpretation). The nonlinear templates control the complexity of the semialgebraic relaxations at the price of coarsening the maxplus approximations. In that way, we arrive at a new - template based - certified global optimization method, which exploits both the precision of sums of squares relaxations and the scalability of abstraction methods. 
We analyze the performance of the method on 
%challenging% dont over sell, global opt experts might consider these to be not too hard
problems from the global optimization literature, as well as medium-size inequalities issued from the Flyspeck project.

\keywords{
Polynomial Optimization Problems \and Semidefinite Programming \and Transcendental Functions \and Semialgebraic Relaxations \and Flyspeck Project \and Quadratic Cuts \and Maxplus Approximation \and Template Method \and Certified Global Optimization
}
% \PACS{PACS code1 \and PACS code2 \and more}
% \subclass{MSC code1 \and MSC code2 \and more}
\end{abstract}

\section{Introduction}
\label{intro}

\subsection{Certification of Nonlinear Inequalities}
Numerous problems coming from different fields of mathematics (like combinatorics, geometry or group theory) have led to computer assisted proofs. One famous example is the proof of the Kepler conjecture, proved by Thomas Hales~\cite{Hales94aproof,halesannmath}. 
Recent efforts have been made to complete the formal verification of this conjecture. In particular, extensive computation are required to certify hundreds of nonlinear inequalities. We will often refer to the following inequality taken from Hales' proof:

\begin{example}[Lemma$_{9922699028}$ Flyspeck]
\label{ex:9922} Let $K$, $\Delta \xb$, $l$, $t$ and $f$ be defined as follows:
%\begin{itemize}
\[			
\begin{array}{rll}
K & :=&  [4, 6.3504]^3 \times [6.3504, 8] \times [4, 6.3504]^2 \enspace, \\
\Delta \xb  &:= & x_1 x_4 ( - x_1 +  x_2 +  x_3  - x_4 +  x_5 +  x_6) \\
&  & + x_2 x_5 (x_1  -  x_2 +  x_3 +  x_4  - x_5 +  x_6)  \\
&  & + x_3 x_6 (x_1 +  x_2  -  x_3 +  x_4 +  x_5  - x_6)  \\
&  & - x_2 x_3 x_4  -  x_1 x_3 x_5  -  x_1 x_2 x_6  - x_4 x_5 x_6 \enspace,\\
l(\xb) & := & -\pi / 2 +  1.6294  - 0.2213 (\sqrt{x_2}  +  \sqrt{x_3} +  \sqrt{x_5} +  \sqrt{x_6}  -  8.0) \\
& & + 0.913 (\sqrt{x_4}  -  2.52) + 0.728 (\sqrt{x_1}  -  2.0) \enspace,\\
t (\xb) & := & \arctan \frac{\partial_4 \Delta \xb }{\sqrt{4 x_1 \Delta \xb}} \enspace,\\
f (\xb) &:=&  l(\xb) + t (\xb) \enspace.
\end{array}
\]
Then, $\forall \xb \in K,  f(\xb)  \geq 0 \enspace.$
\end{example}
 
Note that the inequality of Example~\ref{ex:9922} would be
much simpler to check if $l$ was a constant (rather than a function of $\xb$). 
Indeed, semialgebraic optimization methods would provide precise lower
and upper bounds for the argument of $\arctan$. Then
we could conclude by monotonicity of $\arctan$ using interval
arithmetic. Here, both $l$ and $t$ depend on $\xb$. Hence, by
using interval arithmetic addition (without any domain subdivision) on
the sum $l + t$, which ignores the correlation between the argument of $\arctan$ and
the function $l$, we only obtain a coarse lower bound
(equal to $-0.87$, see Example~\ref{ex:9922_sa} for details);
 too coarse to assert the inequality . 
A standard way to improve this bound consists in subdividing the initial box (i.e. the Cartesian product of closed intervals) $K$ and performing interval arithmetic 
on smaller boxes. However, this approach suffers from the so called {\em curse of dimensionality}. Therefore, it is desirable to develop alternative certified global optimization methods, applicable to a wide class of problems involving semialgebraic and transcendental functions.

Moreover, the nonlinear inequalities of Flyspeck are challenging for numerical solvers for two reasons. First, they involve a medium-scale number of variables (6$\sim$10). Then, they are essentially {\em tight}. For instance, the function $f$ involved in Example~\ref{ex:9922} has a nonnegative infimum which is less than $10^{-3}$. The tightness of the inequalities to be certified is actually a frequent feature in mathematical proofs. Hence, we will pay a special attention
in the present work to {\em scalability} and {\em numerical precision} issues to provide certified bounds for global optimization. This is called {\em informal} certification as one obtains numerical certificates (e.g. sums-of-squares certificates) that can be in turn {\em formally} checked inside a proof assistant, such as Coq. 
%The corresponding formal certification framework is specifically addressed in~\cite{jfr14}, as a further contribution of the authors.
The practical difficulties related to formalization (e.g. polynomial arithmetic implementation, appropriate certificate data-structures, formal proofs of approximations for nonlinear functions) are specifically addressed  in~\cite{jfr14}, as a further contribution of the authors.
\subsection{Nonlinear Global Optimization Problems}
Let $\dsa$ be the set of functions obtained by composing (multivariate) semialgebraic functions with special
functions taken from a {\em dictionary} $\setD$. We will typically include in $\setD$ the usual functions
$\tan$, $\arctan$, $\cos$, $\arccos$, $\sin$, $\arcsin$, $\exp$, $\log$,
$(\cdot)^{r}$ with $r\in \R\setminus\{0\}$. As we allow the composition with semialgebraic functions in our setting,
elementary functions like  $ +, -, \times, /,  |\cdot| ,\sup(\cdot,\cdot), \inf(\cdot,\cdot)$ are of course covered. 
Actually, we shall see that some of the present results remain valid if the dictionary includes semiconvex\footnote{Recall that for $\gamma \geq 0$, a function $\phi: \R^n\to \R$ is said to be {\em $\gamma$-semiconvex} if the function $\xb \mapsto \phi(\xb)+\frac{\gamma}{2}\ltwonorm{\xb}^2$ is convex.} or semiconcave functions with effective lower and upper bounds on the Hessian. More details about semiconcave functions can be found in~\cite{cannarsa2004semiconcave}.

Given $f, f_1, \dots, f_p \in \dsa$, we will address the following global optimization problem:
	
\begin{eqnarray}
\label{eq:fgeneral}
\inf_{\xb \in \R^n} & & f (\xb) \enspace,\\
\text{s.t.} & & f_1 (\xb) \geq 0, \dots, f_p (\xb) \geq 0 \enspace. \nonumber
\end{eqnarray}

The inequalities issued from Flyspeck actually deal with special cases
of computation of a certified lower bound for a real-valued multivariate
function  $f : \R^n \to \R$ over a compact semialgebraic set $K \subset \R^n$.
Checking these inequalities boils down to automatically provide lower bounds for the following instance of Problem~\eqref{eq:fgeneral}:
\begin{equation}
\label{eq:f}
f^*  :=  \inf_{\xb \in K} f (\xb) \enspace,
\end{equation}

We shall also search for {\em certificates} to assess that:
\begin{equation}
\label{eq:ineq}
\forall \xb \in K, f(\xb) \geq 0 \enspace.
\end{equation}

A well studied case is when $\setD$ is reduced to the identity map $\{Id\}$. 
Then, $f = \fsa$ belongs to the algebra $\setA$ of
semialgebraic functions (extension of multivariate polynomials with arbitrary compositions of $(\cdot)^{p}$, $(\cdot)^{\frac{1}{p}} (p \in \N_0)$, $\lvert\cdot\rvert$, $+$, $-$, $\times$, $/$, $\sup(\cdot,\cdot)$,
$\inf(\cdot,\cdot)$, where $\N_0$ stands for the set of positive integers) and Problem~\eqref{eq:fgeneral} specializes to the semialgebraic optimization problem:
\begin{equation}
\label{eq:fsa}
\fsa^*  :=  \inf_{\xb \in K} \fsa (\xb) \enspace.
\end{equation}

Another important sub-case is Polynomial Optimization Problems (POP), when $f = \fpop$ is a multivariate polynomial and $K=\Kpop$ is given by finitely many polynomial inequalities.
Thus, Problem \eqref{eq:fsa} becomes:
\begin{equation}
\label{eq:fpop}
\fpop^*  :=  \inf_{\xb \in \Kpop} \fpop (\xb) \enspace .
\end{equation}

We shall see that the presented methods also provide certified lower bounds (possibly coarse), for optimization problems which are hard to solve by traditional POP techniques. Such problems have a relatively large number of variables (10$\sim$100) or are polynomial inequalities of a moderate degree. For illustration purposes, we consider the following running example
coming from the global optimization literature.
\begin{example}[Modified Schwefel Problem 43 from Appendix B in~\cite{Ali:2005:NES:1071327.1071336}]
\label{ex:modified_swf} 
\[ \min_{\xb \in [1, 500]^n} f(\xb) = - \sum_{i = 1}^{n - 1} (x_i + \epsilon x_{i + 1}) \sin (\sqrt{x_i}) \enspace, \]
where $\epsilon$ is a fixed parameter in $\{0,1\}$. In the original problem, $\epsilon =
0$, \ie{}the objective function $f$ is the sum of independent
functions involving a single variable. This property may be exploited
by a global optimization solver by reducing it to the problem
$\min_{x \in [1, 500]} x \sin (\sqrt{x})$. Hence, we also consider a
modified version of this problem with $\epsilon = 1$.
\end{example}

\subsection{Certified Global Optimization in the Literature}

A common idea to handle
Problem~\eqref{eq:f} is to first approximate $f$ by multivariate
polynomials and then obtain a lower
bound of the resulting approximation by polynomial optimization techniques.

Computing lower bounds in constrained POP (see Problem\eqref{eq:fpop})
 is already a difficult problem, which has received much
attention. Sums of squares (SOS) relaxation based methods, leading to the resolution of semidefinite programs (SDP) have been developed
in~\cite{DBLP:journals/siamjo/Lasserre01, parrilo:polynomials}.
They can be applied to the more general class of semialgebraic
problems \cite{putinar1993positive}. 
Moreover, Kojima has developed a sparse refinement of the hierarchy of SOS relaxations (see~\cite{Waki06sumsof}). This has been implemented in the $\sparsepop$ solver.
Checking the validity of the lower bound of POP implies being able to control and certify the numerical error, as SDP solvers are typically implemented using floating point arithmetic. Such techniques rely on hybrid symbolic-numeric
certification methods, see Peyrl and Parrilo~\cite{DBLP:journals/tcs/PeyrlP08} and Kaltofen et al.~\cite{KLYZ09}. They allow one to produce positivity certificates for such POP. Alternative approaches to SOS/SDP are based
on Bernstein polynomials \cite{Zumkeller:2008:Thesis}.

The task is obviously more difficult in presence of transcendental functions.
Other methods of choice, not restricted to polynomial systems, include global optimization by interval methods (see e.g.~\cite{DBLP:journals/rc/Hansen06}), branch and bound methods with Taylor models~\cite{DBLP:journals/mp/CartisGT11, Berz:2009:RGS:1577190.1577198}.
Other methods involve rigorous Chebyshev approximations. An implementation of such approximations is available in the $\sollya$ tool~\cite{ChevillardJoldesLauter2010}. 

\subsection{Contribution}
\label{sec:contrib}

In this paper, we develop a general certification framework, combining methods from semialgebraic programming (SOS certificates, SDP relaxations) and from approximation theory. This includes classical methods like best uniform polynomials and less classical ones like maxplus approximation (inspired by optimal control and static analysis by abstract interpretation). 

The present approach exploits both the accuracy of SOS relaxations and the scalability of the approximation and abstraction procedure. This leads to a new method in global optimization, the nonlinear template method.
Namely, we alternate steps of semialgebraic approximation for some constituents of the objective function $f$ and semialgebraic optimization. The resulting constrained polynomial optimization problems are solved with sums of squares relaxation from Lasserre hierarchy, by calling a semidefinite solver. In this way, each iteration of the algorithms refines the following inequalities:
\begin{equation}
\label{eq:3relax}
f^* \geq f_{sa}^* \geq f_{pop}^* \enspace,
\end{equation}
where $f^*$ is the optimal value of the original problem,
$\fsa^*$ the optimal value of its current semialgebraic approximation
and $\fpop^*$ the optimal value of the SOS relaxation
which we solve. Under certain moderate assumptions, the lower bound $\fpop^*$ does
converge to $f^*$ (see Corollary~\ref{th:approx_cvg_minimizer}). 

The present nonlinear template method is an improved version of the maxplus approximation method originally presented in~\cite{victorecc}. By comparison, the new ingredient is the introduction of the template technique (approximating projections of the feasible sets), leading to
an increase in scalability.
This technique is an abstraction method, which is inspired by
the linear template of Sankaranarayanan, Sipma and Manna
in static analysis~\cite{Sankaranarayana+others/05/Scalable},
their nonlinear extensions by Adj\'e et al. \cite{adjegaubertgoubault11}.
As discussed below, it is closely related
to the maxplus basis methods, although the methods differ
in the way they propagate approximations. % \cite{PhysRevA.82.042319}.%SG this article is smoehow arbitrary.

In the present application, templates are used
both to approximate transcendental functions,
and to produce coarser but still tractable relaxations
when the standard SOS relaxation of the semialgebraic
problem is too complex to be handled. As a matter of fact,
SOS relaxations are a powerful tool to get tight certified lower bound
for semialgebraic optimization problems, but applying them is currently
limited to small or medium size problems: their execution
time grows exponentially with the relaxation order, which itself
grows with the degree of the polynomials involved in the semialgebraic relaxations. The template method
allows to reduce these degrees, by approximating certain projections
of the feasible set by a moderate number of nonlinear inequalities.

%They are also useful as a
%replacement of standard Taylor approximations of transcendental
%functions: instead of increasing the degree of the approximation,
%one increases the number of functions in the template.

In this article, we present the following approximation schemes:
\begin{itemize}
\item \textbf{Semialgebraic maxplus templates for multivariate transcendental functions} This method uses maxplus approximation of semiconvex transcendental functions by quadratic functions. 
The idea of maxplus approximation comes from optimal control: it was originally introduced
by Fleming and McEneaney~\cite{a5} and developed by several authors \cite{a6,a7,curseofdim,PhysRevA.82.042319,conf/cdc/GaubertMQ11}, to represent the value function by a ``maxplus linear combination'', which
is a supremum of certain basis functions, like quadratic polynomials.  When applied to the present context, this idea leads to approximate from above and from below every transcendental function appearing in the description of the problem by infima and suprema of finitely many quadratic polynomials. In that way, we are reduced to a converging sequence
of semialgebraic problems. A geometrical way
to interpret the method is to think of it
in terms of ``quadratic cuts'' quadratic inequalities
are successively added to approximate the graph
of a transcendental function (Sect.~\ref{sec:template_approx}). 
\item \textbf{Non-convex quadratic templates} Sub-components of the objective function $f$ (\resp{}its semialgebraic approximations) are replaced by suprema of quadratic polynomials  (Sect.~\ref{sec:quadr_templates}).
\item \textbf{Polynomial under-approximations for semialgebraic functions} Given a degree $d$ and a semialgebraic sub-component $\fsa$ of $f$ that involves a large number of lifting variables, we build a hierarchy of polynomial approximations, that converge to the best (for the $\lone$ norm) degree-$d$ polynomial under-approximation of $\fsa$ (Sect.~\ref{sec:l1_under}).
%\item \textbf{Semialgebraic minimax approximations for nonlinear functions} A natural workaround to deal with nonlinear optimization problems  is to approximate the objective function $f$ with its best uniform (also called ``minimax'') degree-$d$ polynomial approximation. Thus, we obtain a hierarchy of minimax semialgebraic approximations. In practice, an interface with the software $\sollya$~\cite{ChevillardJoldesLauter2010} provides the univariate minimax polynomials (Sect.~\ref{sec:template_algo}).
\end{itemize}
%Sect.~\ref{sec:1}
The paper is organized as follows. In Sect.~\ref{sec:sos},
we recall the definition and properties of Lasserre relaxations
of polynomial problems, together with reformulations by Lasserre
and Putinar of semialgebraic problems classes. The  maxplus approximation and the nonlinear templates are presented in Sect.~\ref{sec:templates}.
In Sect.~\ref{sec:template_algo}, we describe the nonlinear template optimization algorithm together with the convergence study of the method.
The main numerical results are presented in Sect.~\ref{sec:benchs}. %Sect.~\ref{sec:cvg} is dedicated to the convergence study of the nonlinear template approximation algorithm.
%the analysis of error bounds for the nonlinear semialgebraic approximations obtained by applying the template method.

\section{Application of SOS to Semialgebraic Optimization}
\label{sec:sos}
Let $\R_d[\xb]$ be the vector space of real forms in $n$ variables of degree $d$ and $\R[\xb]$ the set of multivariate
polynomials in $n$ variables. We also define the cone $\Sigma_{d} [\xb]$ of sums of squares of degree at most $2 d$.
\subsection{Constrained Polynomial Optimization Problems and SOS}
\label{sec:pop}
We consider the general constrained polynomial optimization problem (POP):
\begin{equation}
\label{eq:cons_pop}
\fpop^*  :=  \inf_{\xb \in \Kpop} \fpop (\xb) \enspace,
\end{equation}
where $\fpop : \R^n \to \R$ is a $d$-degree multivariate polynomial,
$\Kpop$ is a compact set defined by polynomials inequalities $g_1(\xb)
\geq 0,\dots,g_m(\xb) \geq 0$ with $g_j(\xb) : \R^n \to \R$ being a
real-valued polynomial of degree $w_j, j = 1,\dots,m$.  We call
$\Kpop$ the feasible set of Problem~\eqref{eq:cons_pop}.
Let $g_0 := 1$. We introduce the $k$-truncated quadratic module $QM_k(\Kpop) \subset \R_{2 k}[\xb]$ associated with $g_1, \cdots, g_m$:
\begin{align*}
QM_k (\Kpop) = \Bigl\{\sum_{j=0}^m \sigma_j(\xb) g_j (\xb): \sigma_j \in \Sigma_{k - \lceil w_j / 2\rceil}[\xb]  \Bigr\} \enspace ,
\end{align*} 
and define the quadratic module $QM(\Kpop) := \bigcup_{k \in \N} QM_k(\Kpop)$.

\begin{definition}
A quadratic module $M$ is called {\em archimedean} if $N - \|\xb\|_2^2 \in M$ for some $N \in \N_0$.
\end{definition}

Let $k \geq k_0 := \max( \lceil d / 2 \rceil, \max_{0 \leq j \leq m} \{\lceil w_j / 2\rceil\} )$ and consider the hierarchy of semidefinite relaxations $Q_k: \sup \{ \mu : \fpop(x) - \mu  \in QM_k(\Kpop)\,, \ \mu \in \R  \}$,
\if{
\[
Q_k:\left\{			
\begin{array}{ll}
\sup\limits_{\mu \in \R} & \mu \\			 
\text{s.t.} & \fpop(x) - \mu  \in QM_k(\Kpop) \enspace,\\
\end{array} \right.
\]
}\fi
with optimal value denoted by $\sup (Q_k)$. The integer $k$ refers to the SOS relaxation order. 
\begin{theorem}
\label{prop:sos_cvg}
The sequence of optimal values $(\sup (Q_k))_{k \geq k_0}$ is
non-decreasing. If the quadratic module $QM(\Kpop)$ is archimedean, then this sequence converges to $\fpop^*$. 
\end{theorem}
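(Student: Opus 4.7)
The plan is to establish the two claims in turn: monotonicity is essentially a containment of feasible sets, while convergence under the archimedean hypothesis reduces to Putinar's Positivstellensatz.

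First I would prove monotonicity by noting that $QM_k(\Kpop) \subseteq QM_{k+1}(\Kpop)$: given a representation $\fpop - \mu = \sum_{j=0}^m \sigma_j g_j$ with $\sigma_j \in \Sigma_{k-\lceil w_j/2\rceil}[\xb]$, one obtains a representation at level $k+1$ simply because $\Sigma_{k-\lceil w_j/2\rceil}[\xb] \subseteq \Sigma_{k+1-\lceil w_j/2\rceil}[\xb]$. Hence every $\mu$ feasible for $Q_k$ is feasible for $Q_{k+1}$, and the sequence $(\sup(Q_k))_{k \geq k_0}$ is non-decreasing. Moreover, for any $\xb \in \Kpop$ and any feasible $\mu$, evaluating $\fpop(\xb) - \mu = \sum_j \sigma_j(\xb) g_j(\xb)$ gives a nonnegative value since each $\sigma_j$ is a sum of squares and each $g_j(\xb) \geq 0$ on $\Kpop$ (with $g_0 \equiv 1$). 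Taking the infimum over $\xb \in \Kpop$ yields $\mu \leq \fpop^*$, so $\sup(Q_k) \leq \fpop^*$ for every $k \geq k_0$.

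For the convergence part, the key ingredient is \emph{Putinar's Positivstellensatz}: if $QM(\Kpop)$ is archimedean and $p \in \R[\xb]$ is strictly positive on $\Kpop$, then $p \in QM(\Kpop)$. I would apply this as follows. Fix $\varepsilon > 0$ and consider the polynomial $p_\varepsilon := \fpop - (\fpop^* - \varepsilon)$. On $\Kpop$, we have $p_\varepsilon(\xb) \geq \fpop^* - (\fpop^* - \varepsilon) = \varepsilon > 0$, so $p_\varepsilon$ is strictly positive on $\Kpop$. By Putinar's theorem, $p_\varepsilon \in QM(\Kpop) = \bigcup_{k \in \N} QM_k(\Kpop)$, so there exists $k_\varepsilon \geq k_0$ such that $p_\varepsilon \in QM_{k_\varepsilon}(\Kpop)$. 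This precisely means that $\mu = \fpop^* - \varepsilon$ is feasible for $Q_{k_\varepsilon}$, hence $\sup(Q_{k_\varepsilon}) \geq \fpop^* - \varepsilon$. Combining with the upper bound $\sup(Q_k) \leq \fpop^*$ and monotonicity yields $\fpop^* - \varepsilon \leq \sup(Q_k) \leq \fpop^*$ for every $k \geq k_\varepsilon$, and since $\varepsilon > 0$ is arbitrary, $\sup(Q_k) \to \fpop^*$.

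The main (non-routine) obstacle is the invocation of Putinar's Positivstellensatz, which is a substantial result whose proof relies on the archimedean condition to control the growth of the SOS multipliers; I would simply cite it (e.g.\ \cite{putinar1993positive}) rather than reprove it. Everything else reduces to definitional containments and evaluation at points of $\Kpop$.
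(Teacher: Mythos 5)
Your proof is correct. The paper itself does not give this argument: it simply cites Lasserre's Theorem~4.2(a) from~\cite{DBLP:journals/siamjo/Lasserre01}, observing that the archimedean hypothesis is exactly Lasserre's Assumption~4.1. What you have written is, in effect, the standard proof of the result being cited: monotonicity from the nesting $QM_k(\Kpop) \subseteq QM_{k+1}(\Kpop)$, the bound $\sup(Q_k) \leq \fpop^*$ by evaluating the certificate at points of $\Kpop$, and convergence by applying Putinar's Positivstellensatz to $\fpop - (\fpop^* - \varepsilon)$. Since Lasserre's theorem is itself derived from Putinar's theorem, the underlying mathematics is the same; your version is just self-contained where the paper delegates, which is arguably preferable for a reader who does not want to unwind the moment/SOS duality in Lasserre's formulation (your argument works entirely on the SOS side, which is how $Q_k$ is defined here). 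The only point worth making explicit is that the convergence step tacitly uses $\fpop^* > -\infty$, which holds because $\Kpop$ is compact and $\fpop$ is continuous; with that remark added, the argument is complete.
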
	
\begin{proof}
The proof follows from~\cite[Theorem 4.2 (a)]{DBLP:journals/siamjo/Lasserre01} as~\cite[Assumption 4.1]{DBLP:journals/siamjo/Lasserre01} is satisfied when $QM(\Kpop)$ is archimedean.\qed
\end{proof}
The non-linear inequalities to be proved in the Flyspeck project typically
involve a variable $\xb$ lying in a box. Thus, Theorem~\ref{prop:sos_cvg} applies when one adds to the definition of $\Kpop$ the redundant constraint $g (\xb) := N - \|\xb\|_2^2 \geq 0$ for some large enough $N \in \N_0$.

\subsection{Semialgebraic Optimization}
\label{sec:sa}
In this section, we recall how the previous approach can be extended to semialgebraic optimization problems by introducing lifting variables.
The set $\setA$ of semialgebraic functions $\fsa : \Ksa \mapsto \R$ is the algebra generated by finite composition of the operations $+, -, \times, /, \sup, \inf, |\cdot|, (\cdot)^{\frac{1}{p}} (p \in \N_0)$ on polynomials, whenever these operations are well-defined (e.g. division by zero never occurs).
%All the semialgebraic function involved in Flyspeck inequalities are well-defined, i.e. each  whenever the operations are well-defined.
Let consider the problem 
\begin{equation}
\label{eq:cons_fsa}
\fsa^* = \inf_{\xb \in \Ksa} \fsa (\xb) \enspace, 
\end{equation}
where $\Ksa := \{\xb \in \R^n \, : \, g_1(\xb) \geq 0, \dots, g_m(\xb) \geq 0 \}$ is a basic semialgebraic set. 

\begin{definition}[Basic Semialgebraic Lifting]
A semialgebraic function $\fsa$ is said to have a basic semialgebraic lifting if there exist $p, s \in \N$, polynomials $ h_1, \dots , h_s \in \R[\xb, z_1,\dots,z_p]$ and a basic semialgebraic set $\Kpop$ defined by:
\[\Kpop :=  \{ (\xb,  z_1,\dots,z_p) \in \R^{n+p} : \xb \in \Ksa, h_1(\xb, \zb) \geq 0,\dots, h_s(\xb, \zb) \geq 0 \} \enspace, \]
such that the graph of $\fsa$ (denoted $\Psi_{\fsa}$) satisfies:
\[\Psi_{\fsa} := \{ (\xb, \fsa(\xb)) : \xb \in \Ksa\} = \{ (\xb, z_p) : (\xb,  \zb) \in \Kpop\} \enspace. \]
\end{definition}
By~\cite[Lemma 3]{DBLP:journals/siamjo/LasserreP10}, every function $\fsa \in \setA$\footnote{We presume that in~\cite[Lemma 3]{DBLP:journals/siamjo/LasserreP10}, ``well-defined function $f$'' stands for the fact that $f$ can be evaluated in a non-ambiguous way on the considered domain.} has a basic semialgebraic lifting. 
\if{
\begin{lemma}[Lasserre, Putinar~\cite{DBLP:journals/siamjo/LasserreP10}]
\label{lemma:bsal}
Every well-defined $\fsa \in \setA$ has a basic semialgebraic lifting.
\end{lemma}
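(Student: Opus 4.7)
The plan is to prove the lemma by structural induction on the construction of the semialgebraic algebra $\setA$. Since $\setA$ is generated by finite composition of the operations $+, -, \times, /, \sup, \inf, |\cdot|, (\cdot)^{1/p}$ on polynomials, I would establish (i) that every polynomial on $\Ksa$ admits a basic semialgebraic lifting, and (ii) that the class of functions admitting a basic semialgebraic lifting is stable under each of the generating operations.

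For the base case, given a polynomial $f \in \R[\xb]$, I set $p = 1$, introduce a single lifting variable $z_1$, and add the pair of polynomial inequalities $h_1(\xb, z_1) = z_1 - f(\xb) \geq 0$ and $h_2(\xb, z_1) = f(\xb) - z_1 \geq 0$, which together enforce $z_1 = f(\xb)$. Then $\Kpop := \{(\xb, z_1) \in \R^{n+1} : \xb \in \Ksa,\, h_1 \geq 0,\, h_2 \geq 0\}$ is basic semialgebraic and its projection onto $(\xb, z_1)$ coincides with the graph of $f$.

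For the inductive step, suppose $f_1, f_2 \in \setA$ already admit basic semialgebraic liftings, with auxiliary tuples $\zb^{(1)} = (z^{(1)}_1, \dots, z^{(1)}_{p_1})$ and $\zb^{(2)} = (z^{(2)}_1, \dots, z^{(2)}_{p_2})$, the last coordinate of each tuple being equal to the value of the corresponding function. To handle a binary operation I concatenate the two systems of polynomial constraints over the disjoint variables $\zb^{(1)}, \zb^{(2)}$, and then introduce one fresh lifting coordinate $w$ with polynomial (in)equalities enforcing $w$ to equal the desired output. Concretely, for $+, -, \times$ this is a single polynomial equation (encoded as two inequalities $P \geq 0$ and $-P \geq 0$); for the quotient $f_1 / f_2$, use $w \cdot z^{(2)}_{p_2} - z^{(1)}_{p_1} = 0$, noting that well-definedness ensures $z^{(2)}_{p_2}$ does not vanish on $\Ksa$; for $\sup(f_1, f_2)$, add $w - z^{(1)}_{p_1} \geq 0$, $w - z^{(2)}_{p_2} \geq 0$ and $(w - z^{(1)}_{p_1})(w - z^{(2)}_{p_2}) = 0$, and symmetrically for $\inf$. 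For the unary operations, I proceed analogously from a single parent lifting: $w = |f_1|$ is encoded by $w \geq 0$ and $w^2 - (z^{(1)}_{p_1})^2 = 0$, while $w = f_1^{1/q}$ is encoded by $w^q - z^{(1)}_{p_1} = 0$, together with $w \geq 0$ when $q$ is even. In every case the resulting set in the extended variables $(\xb, \zb^{(1)}, \zb^{(2)}, w)$ is basic semialgebraic and its projection onto the last new coordinate reproduces the graph of the composed function.

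The main obstacle is essentially careful bookkeeping: each polynomial equality $P = 0$ must be rewritten as the pair of inequalities $P \geq 0$ and $-P \geq 0$ to fit the definition of a basic semialgebraic set, and the sign and non-vanishing conditions implicit in the phrase \emph{well-defined} (for $/$ and the fractional powers $(\cdot)^{1/p}$) must be translated into explicit polynomial inequalities on the appropriate lifting coordinate, so that the multi-valued algebraic relation singles out the correct branch. Since every $\fsa \in \setA$ is built from polynomials by finitely many applications of the allowed operations, the induction terminates in finitely many steps and yields the desired basic semialgebraic lifting.
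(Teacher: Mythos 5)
The paper does not actually prove this lemma: it is stated only as a citation of Lemma~3 of Lasserre and Putinar, and the surrounding text merely illustrates the construction on one example (Example~\ref{ex:9922_sa}). Your structural induction is precisely the standard argument behind that cited result, and it is correct: the base case for polynomials, the concatenation of liftings for binary nodes, and the encodings of $/$, $\sup$, $\inf$, $|\cdot|$ and $(\cdot)^{1/q}$ by one fresh coordinate plus polynomial (in)equalities all check out, and they match the pattern the paper uses concretely for $\sqrt{4x_1\Delta\xb}$ and the quotient in Example~\ref{ex:9922_sa} (there $h_3,h_4$ encode $z_1^2=4x_1\Delta\xb$ and $h_5,h_6$ encode $z_2z_1=\partial_4\Delta\xb$). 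Two points deserve the explicit care you already flag: the projection property requires that for each $\xb\in\Ksa$ the lifting coordinates exist \emph{and} are uniquely determined, which is where the sign constraint $w\geq 0$ for even roots and absolute values, and the non-vanishing of the denominator for $/$, are genuinely needed and are exactly what ``well-defined'' supplies; and every equality must be split into two inequalities to fit the paper's definition of $\Kpop$. So your proposal is a faithful reconstruction of the omitted proof rather than a divergent route.
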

}\fi
To ensure that the Archimedean condition is preserved, we add bound constraints over the lifting variables. These bounds are computed by solving semialgebraic optimization sub-problems. All the semialgebraic functions involved in Flyspeck inequalities have a basic semialgebraic lifting.

\begin{example} [from Lemma$_{9922699028}$ Flyspeck]
\label{ex:9922_sa}
Continuing Example~\ref{ex:9922}, we consider the function
$\fsa := \frac{\partial_4 \Delta \xb }{\sqrt{4 x_1 \Delta \xb}}$ and the set $\Ksa := [4, 6.3504]^3 \times [6.3504, 8] \times [4, 6.3504]^2$. The latter can be equivalently rewritten as 
\[
\Ksa := \{ \xb \in \R^6 : g_1 (\xb) \geq 0, \dots, g_{12} (\xb) \geq 0\} \enspace,
\]
where $g_1(\xb) := x_1 - 4, g_2(\xb) := 6.3504 - x_1, \dots , g_{11}(\xb) := x_6 - 4, g_{12}(\xb) := 6.3504 - x_6$.

We introduce two lifting variables $z_1$ and $z_2$, respectively representing the terms $\sqrt{4 x_1 \Delta \xb}$ and $\frac{\partial_4 \Delta \xb }{\sqrt{4 x_1 \Delta \xb}}$. 

We also use a lower bound $m_1$ of $ \inf_{\xb \in \Ksa} \sqrt{4 x_1 \Delta \xb}$ and an upper bound $M_1$ of $ \sup_{\xb \in \Ksa} \sqrt{4 x_1 \Delta \xb}$ which can be both computed by solving auxiliary subproblems. 

Now the basic semialgebraic set $\Kpop$ and the graph $\Psi_{\fsa}$ of $\fsa$ can be defined as follows:
\begin{align*}
\Kpop := {} & \{ (\xb, z_1 ,z_2) \in \R^{6+2} : 
\xb \in \Ksa, \, h_j(\xb, z_1, z_2) \geq 0,
j = 1,\dots,6 \} \enspace,  
\\
\Psi_{\fsa} := {} & \{ (\xb, \fsa(\xb)) : \xb \in \Ksa\} =  \{ (\xb, z_2) : (\xb, z_1, z_2) \in \Kpop\} \enspace,
\end{align*}
where the multivariate polynomials $h_j$ are defined by:
\begin{align*}
h_1(\xb,  \zb) & := z_1 - m_1 \enspace, & h_4(\xb,  \zb) & := - z_1^2 + 4 x_1 \Delta \xb \enspace, \\
h_2(\xb,  \zb) & := M_1 - z_1 \enspace, & h_5(\xb,  \zb) & := z_2 z_1 - \partial_4 \Delta \xb \enspace, \\
h_3(\xb,  \zb) & := z_1^2 - 4 x_1 \Delta \xb \enspace, & h_6(\xb,  \zb) & := - z_2 z_1 + \partial_4 \Delta \xb\enspace.
\end{align*}
Let $h_0 := 1, \omega_l := \deg h_l \ (0 \leq l \leq 6)$. 
Consider the following semidefinite relaxations:
\[
Q_k^{sa}: \left\{			
\begin{array}{ll}
\max\limits_{\mu, \sigma_j, \theta_l} & \mu \\			
\text{s.t.} & z_2 - \mu = \sum_{j=1}^{12} \sigma_j(\xb, \zb) g_j (\xb) + \sum_{l=0}^6 \theta_l(\xb, \zb) h_l (\xb, \zb), \enspace \forall (\xb, \zb) \enspace, \\
            & \sigma_j \in \Sigma_{k - 1}[\xb, \zb], \enspace 1 \leq j \leq 12 \enspace, \\
            & \theta_l \in \Sigma_{k - \lceil \omega_l / 2\rceil}[\xb, \zb], 0 \leq l \leq 7 \enspace .
\end{array} \right.
\]
When $k \geq k_0 := \max_{1 \leq j \leq 6} \{\lceil\omega_j / 2\rceil \}= 2$, then as a special case of Theorem~\ref{prop:sos_cvg}, the sequence $(\inf (Q_k^{sa}))_{k \geq 2}$ is monotonically non-decreasing and converges to $\fsa^*$. The lower bound $m_2 = -0.618$ computed  
at the $Q_2^{sa}$ relaxation is too coarse. A tighter lower bound $m_3 = -0.445$ is obtained at the third relaxation, but it consumes more CPU time.
\end{example}

\section{Maxplus Approximations and Nonlinear Templates}
\label{sec:templates}

\subsection{The Basis of Maxplus Functions} 
\label{sec:maxplusbasis}
Let $\cB$ be a set of functions $\R^n\to \R$, whose elements
will be called \textit{maxplus basis functions}. Given
a function $f:\R^n\to \R$, we look
for a representation of $f$ as a linear combination
of basis functions in the maxplus sense, i.e.,
\begin{equation}
\label{eq:fenchel}
f = \sup_{w\in \cB}(a(w) + w) \enspace, 
\end{equation}
where $(a(w))_{w\in \cB}$ is a family of elements of $\R \cup\{-\infty\}$
(the ``coefficients''). 
The correspondence between the function $x \mapsto f(x)$ and the 
coefficient function $w\mapsto a(w)$ is a well studied
problem, which has appeared in various
guises (Moreau conjugacies, generalized Fenchel transforms, Galois correspondences, see~\cite{agk04} for more background).

The idea of maxplus approximation~\cite{a5,mceneaney-livre,a6} is to
choose a space of functions $f$ and a corresponding set $\cB$ of
basis functions $w$ and to approximate from below a given $f$ in this space by a finite maxplus linear combination,
\(
f \simeq  \sup_{w\in \cF}(a(w) + w) \enspace, 
\)
where $\cF\subset\cB$ is a finite subset. Note that 
$ \sup_{w\in \cF}(a(w) + w) $ is not only an approximation but a valid
lower bound of $f$. This is reminiscent of classical linear approximation
methods and in particular of the finite element methods, in which
a function in an finite dimensional space is approximated by a linear combination of prescribed elementary functions. Note that the term ``basis''
is abusive in the maxplus setting, as the family of functions $w\in \cF$
is generally not free in the tropical sense. 

A convenient choice of maxplus basis functions is the following~\cite{a5,a6}.
For each constant
$\gamma\in \R$, we shall consider 
the family of quadratic functions $\cB=\{w_{\yb} \mid \yb \in \R^n\}$, where 
\begin{align}
w_{\yb}(\xb):= -\frac{\gamma}{2} \ltwonorm{\xb-\yb}^2  \enspace .
\label{e-quadraticbasis}
\end{align}
Whereas in classical approximation problems, the ambient function spaces of interest are Sobolev spaces $H^k$,
or spaces $\mathcal{C}^k$ of $k$ times differentiable functions, in the tropical
settings, the appropriate spaces, consistent with the choice of quadratic
maxplus basis functions, turn out to consist of {\em semiconvex functions},
which we next examine.

\subsection{Maxplus Approximation for Semiconvex Functions}
\label{sec:samp_lsc}

The following definition is standard in variational analysis.
\begin{definition}[Semiconvex function]
Let $\gamma$ denote a nonnegative constant.
A function $\phi: \R^n\to \R$ is said to be {\em $\gamma$-semiconvex} if the function $\xb \mapsto \phi(\xb)+\frac{\gamma}{2}\ltwonorm{\xb}^2$ is convex. 
\end{definition}
\begin{proposition}
Let $\cB$ denote the set of quadratic functions $w_{\yb}$
of the form \eqref{e-quadraticbasis} with $\yb \in \R^n$.
Then, the set of functions $f$ which can be written as a maxplus linear
combination~\eqref{eq:fenchel} for some function $a: \cB \to \R\cup\{-\infty\}$ is precisely the set of lower semicontinuous $\gamma$-semiconvex functions. 
\end{proposition}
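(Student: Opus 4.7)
The plan is to reduce the statement to the classical Fenchel–Moreau representation of convex lower semicontinuous functions as suprema of their affine minorants, by absorbing the quadratic term $-\frac{\gamma}{2}\ltwonorm{\xb-\yb}^2$ into a global quadratic correction.

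First I would expand the basis functions using the identity
\[
w_{\yb}(\xb) \;=\; -\frac{\gamma}{2}\ltwonorm{\xb-\yb}^2 \;=\; -\frac{\gamma}{2}\ltwonorm{\xb}^2 + \gamma\langle \xb,\yb\rangle -\frac{\gamma}{2}\ltwonorm{\yb}^2 .
\]
Setting $g(\xb) := f(\xb)+\frac{\gamma}{2}\ltwonorm{\xb}^2$ and introducing the change of variables $b(\yb) := a(w_{\yb}) - \frac{\gamma}{2}\ltwonorm{\yb}^2$, any representation $f = \sup_{\yb}(a(w_{\yb}) + w_{\yb})$ becomes, after adding $\frac{\gamma}{2}\ltwonorm{\xb}^2$ on both sides,
\[
g(\xb) \;=\; \sup_{\yb\in \R^n}\bigl(\gamma\langle \xb,\yb\rangle + b(\yb)\bigr).
\]
This exhibits $g$ as a supremum of affine functions of $\xb$, and conversely a representation of $g$ in this form yields, by reversing the change of variables, a maxplus expansion of $f$. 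Note that $\yb\mapsto w_{\yb}$ is injective (the unique maximizer of $w_{\yb}$ is $\yb$), so treating $a$ as a function of $\yb$ rather than of $w_{\yb}\in\cB$ is harmless.

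For the direct implication, I would observe that each affine function $\xb \mapsto \gamma\langle \xb,\yb\rangle + b(\yb)$ is convex and continuous, and that a pointwise supremum of convex (respectively lower semicontinuous) functions is convex (respectively lower semicontinuous). Hence $g$ is convex and lower semicontinuous, which by the definition of semiconvexity means $f$ is $\gamma$-semiconvex; lower semicontinuity of $f$ then follows from continuity of $\frac{\gamma}{2}\ltwonorm{\cdot}^2$.

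For the converse, assume $f$ is lower semicontinuous and $\gamma$-semiconvex, so $g$ is convex and lower semicontinuous on the whole of $\R^n$ with finite values. By the Fenchel–Moreau theorem, $g$ equals the supremum of its affine minorants: every affine minorant can be written in the form $\xb\mapsto \gamma\langle \xb,\yb\rangle + c_{\yb}$ for some $\yb\in \R^n$ and $c_{\yb}\in \R$ (by identifying the slope $p$ with $\gamma \yb$). Defining
\[
a(w_{\yb}) \;:=\; \sup\bigl\{\,c\in\R : \gamma\langle \xb',\yb\rangle + c \leq g(\xb')\ \text{for all } \xb'\in \R^n \bigr\} + \frac{\gamma}{2}\ltwonorm{\yb}^2,
\]
with the convention $\sup\emptyset = -\infty$, and undoing the change of variables, one recovers $f(\xb) = \sup_{\yb}(a(w_{\yb}) + w_{\yb}(\xb))$.

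The main delicate point is verifying that the Fenchel–Moreau representation in the form above covers every slope $\gamma \yb$ with $\yb\in \R^n$, which is where lower semicontinuity and finiteness of $g$ on all of $\R^n$ (hence continuity on the interior of its domain, which is everything) enter. Beyond this there are only the bookkeeping steps of reversing the substitutions, which are routine algebra.
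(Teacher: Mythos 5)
Your argument is correct and is essentially the paper's own proof: both reduce the statement to the Fenchel--Moreau biconjugation theorem applied to $g(\xb)=f(\xb)+\frac{\gamma}{2}\ltwonorm{\xb}^2$, with the completion of the square (your substitution $b(\yb)=a(w_{\yb})-\frac{\gamma}{2}\ltwonorm{\yb}^2$, the paper's identification $\yb=p/\gamma$) converting affine minorants of $g$ into the quadratic basis functions $w_{\yb}$, and the converse following because a supremum of convex (resp.\ lower semicontinuous) functions is convex (resp.\ lower semicontinuous). Your extra bookkeeping (injectivity of $\yb\mapsto w_{\yb}$, the convention $\sup\emptyset=-\infty$ for slopes admitting no affine minorant) is sound and slightly more careful than the paper's write-up, but does not change the substance.
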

\begin{proof}
Let us note $h^\star:  \R^n \to \R\cup \{\pm \infty\}$ the Legendre-Fenchel\index{Legendre-Fenchel transform} transform of a function $h: \R^n \to \R\cup \{\pm \infty\}$, so that 
%\[
$h^\star(p):=\sup_{x\in \R^n} \langle p,x \rangle -h(x).$ %\enspace .
%\]
A known fact is that a convex lower semicontinuous function $g: \R^n
\to \R\cup\{\pm\infty\}$ 
is the supremum of the affine functions that it dominates~\cite[Th.~12.1]{rockafellar}. Actually, it is shown there that 
%\[
$g(x) = g^{\star\star}(x)= \sup_{p\in \R^n} \langle p, x\rangle -g^\star(p)$.
%\enspace .
%\]
By applying this result to the function  $g(x) = f(x) + \frac{\gamma}{2}\|x\|_2^2$, we deduce that 
$f(x) = \sup_{p\in \R^n} \langle p, x\rangle  - \frac{\gamma}{2}\|x\|_2^2 -g^\star(p)
= \sup_{p\in \R^n} -\frac{\gamma}{2}\|x-\frac{1}{\gamma}p\|_2^2 -g^\star(p)+ \frac{1}{2\gamma}\|p\|_2^2$.
\if{
\begin{align*}
f(x) &= \sup_{p\in \R^n} \langle p, x\rangle  - \frac{\gamma}{2}\|x\|_2^2 -g^\star(p)\\
&= \sup_{p\in \R^n} -\frac{\gamma}{2}\|x-\frac{1}{\gamma}p\|_2^2 -g^\star(p)+ \frac{1}{2\gamma}\|p\|_2^2 \enspace,
\end{align*}
}\fi
which is of the form~\eqref{e-quadraticbasis}. 

Conversely, 
since an arbitrary supremum
of $\gamma$-semiconvex and lower semicontinuous \index{lower semicontinuous}
is also $\gamma$-semiconvex and lower semicontinuous, the
supremum in~\eqref{e-quadraticbasis} defines a 
$\gamma$-semiconvex and lower semicontinuous function.\qed
\end{proof}

The transcendental functions which we consider here are twice
continuously differentiable. Hence, their restriction to any bounded convex set 
is $\gamma$-semiconvex for a sufficiently large $\gamma$, so that they can be approximated by finite suprema
of the form $\sup_{w\in \cF}(a(w) + w) $ with $\cF\subset \cB$.

The following result is derived
in~\citep[Theorem 3.2]{conf/cdc/GaubertMQ11} using methods and results
of Gr\"uber~\cite{b2}, who studied the best approximation of a convex
body by a polytope. 
%which is taken from
%(see~\citep[Theorem 3.2]{conf/cdc/GaubertMQ11} for the original statement) 
It shows that if $N=|\cF|$
basis functions are used, then the best approximation
error is precisely of order $1/N^{2/n}$ (the error is the sup-norm, over any compact set),
provided that the function to be approximated is of class $\mathcal{C}^2$.
We call $\Hphi(\xb)$ the Hessian matrix of $\phi$ at $\xb$ and 
suppose that we approximate the function $\phi$ by the finite supremum
of $N$ $\gamma$-semiconvex functions
parametrized by $p_i (i = 1, \dots, N)$ and $a_i (i = 1, \dots, N)$:
\[ \phi \simeq \tilde{\phi}_N := \max\limits_{1 \leq i \leq N } \{ \frac{\gamma}{2}\ltwonorm{\xb}^2 + p_i^T \xb + a(p_i)  \}\enspace. \enspace\]

\begin{theorem}[$\linf$ approximation error, {\citep[Theorem 3.2]{conf/cdc/GaubertMQ11}}]
\label{th:maxpluslinf}
Let $\gamma \in \R$, $\epsilon >0$ and let $K \subset \R^n$ denote any full dimensional compact convex subset. If $\phi: \R^n \mapsto \R$  is $(\gamma - \epsilon)$-semiconvex of class $\mathcal{C}^2$, then there exists a positive constant $\alpha$ depending only on $n$ such that:
\[ \linfnorm{\phi - \tilde{\phi}_N} \sim \dfrac{\alpha}{N^{2/n}} \Bigl(\int_K [\det (\Hphi(\xb) + \gamma I_n)]^{\frac{1}{2}} d\xb  \Bigr)^{\frac{2}{n}} \text{ as } N \rightarrow \infty \enspace .\]
\end{theorem}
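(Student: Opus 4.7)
The plan is to reduce the maxplus approximation problem to a classical problem of best polytopal approximation of a smooth convex body, and then invoke Gr\"uber's asymptotic formula. The starting observation is that, since $\phi$ is $(\gamma-\epsilon)$-semiconvex of class $\mathcal{C}^2$, the function
\[
g(\xb) := \phi(\xb) + \frac{\gamma}{2}\ltwonorm{\xb}^2
\]
is $\epsilon$-strongly convex, with Hessian $\Hphi(\xb) + \gamma I_n \succeq \epsilon I_n$. A candidate approximation
$\tilde\phi_N(\xb) = \max_{1\le i\le N}\{\frac{\gamma}{2}\ltwonorm{\xb}^2 + p_i^T\xb + a(p_i)\}$ corresponds, after subtracting $\frac{\gamma}{2}\ltwonorm{\xb}^2$, to a piecewise-affine convex function $\tilde g_N(\xb) = \max_i\{p_i^T\xb + a(p_i)\}$. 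Hence
\(
\linfnorm{\phi-\tilde\phi_N} = \linfnorm{g-\tilde g_N},
\)
and the task is to find the optimal sup-norm approximation on $K$ of the strongly convex $\mathcal{C}^2$ function $g$ by a supremum of $N$ affine forms, i.e.~by a piecewise-affine convex function with at most $N$ pieces.

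Next I would translate this into a geometric question about polytopal approximation. Approximating $g$ from below by a supremum of $N$ affine functions in sup-norm is equivalent to approximating the epigraph of $g$ from outside (or equivalently the hypograph of $-g$ from inside) by a convex polyhedron with at most $N$ facets. Because $g$ is of class $\mathcal{C}^2$ with $\Hphi+\gamma I_n \succ 0$ on the full-dimensional compact $K$, its epigraph restricted to $K\times\R$ is (locally) a smooth strictly convex body, and sup-norm error between $g$ and a piecewise-affine minorant coincides, up to bounded factors, with a normal (vertical) Hausdorff deviation between the graph and the approximating polyhedral surface.

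Then I would invoke Gr\"uber's theorem on best polytopal approximation of a smooth convex body in $\R^{n+1}$ (see~\cite{b2}), which gives that the optimal deviation of a $\mathcal{C}^2$ strictly convex hypersurface in $\R^{n+1}$ by a polyhedral surface with $N$ facets satisfies an asymptotic equivalent of the form
\[
\text{err} \sim \frac{\alpha}{N^{2/n}}\Bigl(\int_K \sqrt{\det \kappa(\xb)}\, d\xb\Bigr)^{2/n} \enspace,
\]
where $\kappa(\xb)$ denotes the Gauss--Kronecker curvature at the point of the surface above $\xb$, and $\alpha$ depends only on $n$. For the graph of $g$, the affine invariant $\sqrt{\det \kappa(\xb)}$ reduces, up to lower-order terms that are uniformly controlled on the compact set $K$, to $\sqrt{\det(\Hphi(\xb)+\gamma I_n)}$ (this is where the strong convexity $\Hphi+\gamma I_n\succeq \epsilon I_n>0$ is essential, since it keeps the curvature bounded away from zero and avoids degeneracies in Gr\"uber's formula). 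Substituting yields exactly the stated asymptotic.

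I expect the main obstacle to be the geometric reduction step: carefully relating sup-norm approximation of a convex function on a compact set $K$ to Hausdorff/symmetric-difference approximation of the associated convex body by a polytope with $N$ facets, so that Gr\"uber's theorem can be applied directly. In particular, boundary effects on $\partial K$, the translation between ``number of affine pieces'' and ``number of facets'', and the identification of the curvature density $\sqrt{\det\kappa}$ of the graph with $\sqrt{\det(\Hphi+\gamma I_n)}$, all require care; once these identifications are in place, Gr\"uber's result delivers the conclusion. The remainder of the argument is essentially bookkeeping: the dimension counting ($n$-dimensional hypersurface in $\R^{n+1}$) gives the exponent $2/n$, and the exponent $2/n$ on the curvature integral comes from the optimal distribution-of-facets argument in Gr\"uber's proof.
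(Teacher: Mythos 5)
The paper does not prove this statement at all: it is quoted verbatim as Theorem~3.2 of the cited reference \citep{conf/cdc/GaubertMQ11}, with the remark that it is derived there from Gr\"uber's asymptotic results on best polytopal approximation of smooth convex bodies. Your sketch (pass to the $\epsilon$-strongly convex function $g=\phi+\frac{\gamma}{2}\ltwonorm{\xb}^2$, identify the maxplus approximation with a piecewise-affine minorant of $g$, and invoke Gr\"uber's formula with curvature density $\sqrt{\det(\Hphi+\gamma I_n)}$) is precisely the route taken in that reference, so it matches the paper's approach.
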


Thus, the best approximation satisfies
\begin{align}
\linfnorm{\phi - \tilde{\phi}_N} \simeq \frac{C(\phi)}{N^{2/n}} \enspace,
\label{e-pessimistic}
\end{align}
where the constant $C(\phi)$ is explicit (it depends of $\det (\Hphi+ \gamma I_n)$
and is bounded away from $0$ when $\epsilon$ is fixed). 
This approximation
indicates that some curse of dimensionality is unavoidable:
to get a uniform error of order $\epsilon$, one needs
a number of basis functions of order $1/\epsilon^{n/2}$. 
Equivalently, the approximation error is of order $O(h^{\frac{2}{n}})$ where $h$ is a space discretization step.
The assumption that $\tilde{\phi}_N$ is of class $\mathcal{C}^2$ in Theorem~\ref{th:maxpluslinf} is needed to obtain a tight asymptotics of the approximation error. However, the max-plus approximation error is known to be of order
$O(N^{2/n})$ under milder assumptions,
requiring only semi-convexity type condition, see Proposition~64
of~\cite{lakhouathesis}, and also Lemma~16 of~\cite{a6} for a coarser approximation in $O(N^{1/n})$ valid in more general circumstances.
%% best maxplus approximation $\tilde{\phi}_N$ does converge uniformly to $\phi$ under a milder assumption: it suffices that $\phi$ be $\gamma$-semiconvex and Lipschitz
%continuous, as shown in~\cite{Akian:2008:MFE:1362754.1362769}.
%M. Akian, S. Gaubert, and A. Lakhoua The max-plus finite element method for solving deterministic optimal control problems: basic properties and convergence analysis, SIAM J. Control, vol 47, number 2, pp 817--848, 2008.
This is due to the asymmetrical character of the maxplus
approximation (a ``one-sided'' regularity, captured by the
semiconvexity condition, is involved). Thus, unlike Taylor
models, max-plus approximation does not require a
$\mathcal{C}^k$ type regularity. For instance, a nonsmooth
function like $|x|-x^2/2=\max(x-x^2/2,-x-x^2/2)$
can be perfectly represented by two quadratic max-plus basis functions.
In what follows, we shall always apply the approximation 
to small dimensional constituents of the optimization
problems. %For the applications considered in this chapter, $n=1$. 

In this way, starting from a transcendental univariate
elementary function $f \in \setD$,  such as $\arctan$, $\exp$, \etc{},
defined on a real bounded interval $I$, we arrive at a 
semialgebraic lower bound of $f$, which is nothing but
a supremum of a finite number of quadratic functions.

\begin{example}
Consider the function $f= \arctan$ on an interval $I := [m, M]$.
For every point $a\in I$, we can find a constant $\gamma$
such that
\[
\arctan (x)  \geq \parab_{a}^-(x):=  -\frac{\gamma}{2} (x-a)^2 +f'(a) (x - a) + f (a)
\enspace .
\]
Choosing $\gamma=\sup_{x\in I} -f''(x)$ always work. However, 
it will be convenient to allow $\gamma$ to depend on the choice of $a$ to get tighter lower bounds. 
Choosing a finite subset $A\subset I$, we arrive at an approximation
\begin{equation}
\label{eq:max_par}
\forall x \in I , \, \arctan \, (x) \geq \max_{a\in A} \, \parab_{a}^-(x) \enspace .
\end{equation} 
Semialgebraic over-approximations $x \mapsto \min_{a \in A} \parab_{a}^+(x)$ can be defined in a similar way. 
Examples of such under-approximations and over-approximations are depicted in Fig.~\ref{fig:samp_atn}.
\begin{figure}[!ht]	
\begin{center}
\includegraphics[scale=0.7]{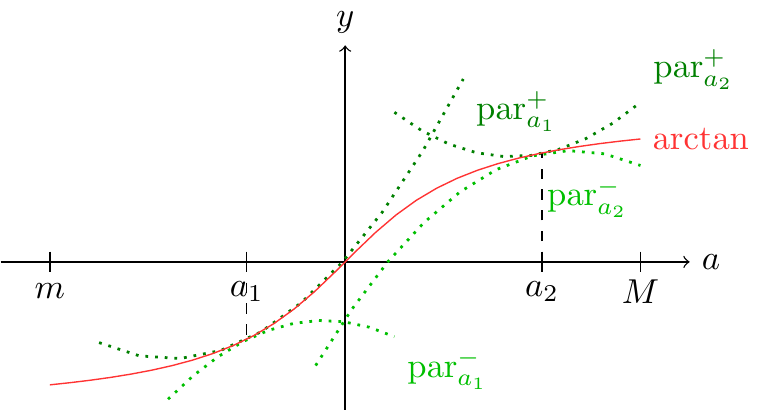}
\caption{Semialgebraic under-approximations and over-approximations for $\arctan$}	\label{fig:samp_atn}		
\end{center}
\end{figure}
\end{example}
\vspace*{-0.5cm}
\begin{example}
Consider the bivariate function $g : (x_1, x_2) \mapsto \sin(x_1 + x_2)$, defined on $K := [-1.5,4]\times[-3,3]$, which is a component of the objective function from Problem MC (see Appendix~\ref{sec:appa}). As in the previous example, we can build under-approximations for the sine function. Choosing $\gamma=1$, for every $(x_1, x_2) \in K$ and every $a \in [-4.5, 7]$, one has 
%\[
$\sin (x_1 + x_2)  \geq  -\frac{1}{2} (x_1 + x_2-a)^2 +\cos(a) (x_1 + x_2 - a) + \sin (a)$.
%\enspace .
%\]
%Fig.\ref{fig:samp3d} displays the function $g$ (the red surface) and two under-approximations of $g$ on $K$ (the green surfaces) obtained with $a := -4.5$ and $a := -2/3$.
\if{
\begin{figure}[!ht]
\centering
\includegraphics[scale=0.7]{samp3d.pdf}
\caption{Semialgebraic under-approximations for $(x_1, x_2) \mapsto \sin(x_1 + x_2)$}	\label{fig:samp3d}
\end{figure}
}\fi
\end{example}

%\vspace*{-0.5cm}
\subsection{Nonlinear Templates}
\label{sec:templates_intro}

The non-linear template method is a refinement of polyhedral based
methods in static analysis~\cite{Sankaranarayana+others/05/Scalable}.
It can also be closely related to the non-linear 
extension~\cite{adjegaubertgoubault11} of the template method and
to the class of affine relaxation methods~\cite{Messine_extensionsof}.

Templates allow one to determine invariants of programs
by considering parametric families of subsets of $\R^n$ of the form
\(
S(\alpha) = \{\xb\mid w_i(\xb) \leq \alpha_i, \; 1\leq i\leq p\},
\) 
where the vector $\alpha\in \mathbb{R}^p$ is the parameter,
and $w_1,\dots,w_p$ (the template) are fixed 
possibly non-linear functions, tailored 
to the program characteristics.

The nonlinear template method yields a tradeoff between the coarse bounds of interval calculus and the tighter bounds obtained with high-degree polynomial approximation (see Remark~\ref{rk:templates}). On the one hand, templates take into account the correlations between the different variables. On the other hand, instead of increasing the degree of the approximation, one may increase the number of functions in the template. %Here, in most basic examples, the functions $w_i$ of the template are linear or quadratic functions. 

\begin{remark}
\label{rk:templates}
Notice that by taking a trivial template (bound constraints, \ie{}, 
functions of the form $\pm x_i$), the template method specializes to a version of interval calculus, in which
bounds are derived by SOS techniques. %The method described in Sect.~\ref{sec:l1_under} allows to build template approximations for semialgebraic functions, with no lifting variables.
The standard Taylor (\resp{}Chebyshev) approximations of transcendental functions can also be retrieved by instantiating some of the $w_i$ to degree-$d$ Taylor polynomials (\resp{}best uniform degree-$d$ polynomials).
\end{remark}

The max-plus basis
method introduced in Sect.~\ref{sec:maxplusbasis} is equivalent to the approximation of the epigraph
of a function by a set $S(\alpha)$. This method involves the approximation from below of a function $f$ in $n$ variables
by a supremum
%\begin{align}
$f \gtrapprox g:=\sup_{1\leq i \leq p} \lambda_i + w_i $. %\enspace .\label{e-sup}
%\end{align}
The functions $w_i$ are fixed in advance, or dynamically
adapted by exploiting the problem structure. The parameters
$\lambda_i$ are degrees of freedom. 

The template method consists in propagating approximations of the
set of reachable values of the variables of a program
by sets of the form $S(\alpha)$.  
The non-linear template and max-plus approximation methods
are somehow related. Indeed, the $0$-level set of $g$, $\{\xb \mid g(\xb) \leq 0\}$, is nothing but
$S(-\lambda)$, so templates can be recovered
from max-plus approximations and vice versa.
The functions $w_i$ are usually required to be 
quadratic polynomials, $w_i(\xb)  = q_i^T \xb + \frac{1}{2} \xb^T A_i \xb$,
where $q_i\in \R^n$ and $A_i$ is a symmetric matrix. A
basic choice is  $A_i=-\gamma I_n$, where $\gamma$ is a fixed constant. Then, the parameters $q$ remain the only degrees of freedom.

\section{The Nonlinear Template Optimization Algorithm}
\label{sec:template_algo}

Here we explain how to combine semialgebraic optimization techniques with approximation tools for univariate or semialgebraic functions. Let us consider an instance of Problem~\eqref{eq:f}. We assimilate the objective function $f$ with its abstract syntax tree $t$. We assume that the leaves of $t$ are semialgebraic functions in the set $\setA$ and other nodes are univariate transcendental functions ($\arctan$, \etc{}) or basic operations ($+$, $\times$, $-$, $/$). For the sake of the simplicity, we suppose that each univariate transcendental function is monotonic. 

\subsection{A Semialgebraic Template Approximation Algorithm}
\label{sec:template_approx}

The auxiliary algorithm $\templateapprox$ is presented in Fig.~\ref{alg:template_approx}. 

Given an abstract syntax tree $t$, a semialgebraic set $K :=  \{ \xb \in \R^n : g_1(\xb) \geq 0,\dots, g_m(\xb) \geq 0\}$, an SOS relaxation order $k$ and a precision $\precision$ which can be either a finite sequence $s$ of points $\xb_1,\dots,\xb_p \in K$ or a polynomial approximation degree $d$, the algorithm  $\templateapprox$ computes a lower  bound $m$ (\resp{}upper bound $M$) of $t$ over $K$ and an under-approximation $t^-$ (\resp{}an over-approximation $t^+$) of $t$ by means of semialgebraic functions. We assume that the semialgebraic set $K :=  \{ \xb \in \R^n : g_1(\xb) \geq 0,\dots, g_m(\xb) \geq 0\}$ is contained in a box of $\R^n$.

%The $\templateapprox$ algorithm relies on an approximation method (so-called $\uopapprox$) for univariate (possibly transcendental) functions.
%and an approximation method $\saapprox$ for semialgebraic functions.
When $t \in \setA$ (Line~\lineref{line:begin_template}), it suffices to set $t^- = t^+ := t$.

When the root of $t$ is a binary operation whose arguments are two children $c_1$ and $c_2$, we apply recursively $\templateapprox$ to each child and get semialgebraic under-approximations $c_{1}^-$, $c_{2}^-$ and
over-approximations $c_{1}^+$, $c_{2}^+$. Then, we obtain semialgebraic approximations of $t$ by using the semialgebraic arithmetic procedure $\composebop$ (the rules are analogous with interval calculus). 

When $t$ corresponds to the composition of a transcendental (unary) function $r$ with a child $c$, lower and upper bounds $m_{c}$ and $M_{c}$ are recursively obtained (Line~\lineref{line:samp_template}), as
well as semialgebraic approximations $c^-$ and $c^+$. Then we define $I := [m_c, M_c]$ and apply the function $\uopapprox$ to get approximations $r^-$ and $r^+$ of $r$ over $I$. The parameters of $\uopapprox$ are the univariate function $r$, the precision $p$ of the approximation and the closed interval $I$ where $r$ must be approximated.
We shall need to consider various schemes for $\uopapprox$:
\begin{enumerate}
\item A classical one is the approximation of univariate functions with best uniform polynomials of increasing degrees through Remez algorithm.
In practice, we use the function $\remez$ available in the $\sollya$ tool~\cite{ChevillardJoldesLauter2010}. In this case, the precision is the degree-$d$ of the minimax polynomial approximation. When the algorithm converges and returns a degree-$d$ polynomial $f_d$, then a numerical approximation of the infinity norm of the error function $(r - f_d)$ on the interval $I$ can be obtained ($\infnorm$ routine from $\sollya$). 
\item An alternative approach is to compute maxplus approximation using the semiconvexity properties of $r$ on the interval $I$ (see Sect.~\ref{sec:samp_lsc}). Doing so, one bounds $r$ from below with a function $r^-$ being a supremum of parabola as well as from above with a function $r^+$ being a infimum of parabola.  
In this case, the precision is determined by certain sets $s$ of points, which also parametrize the approximations $r^-$ and $r^+$ (see e.g. the right hand side of~\eqref{eq:max_par} for an example of under-approximation $r^-$).
%using the semiconvexity properties of $r$ on the interval $I$ 
\end{enumerate}
The approximations $r^-$ and $r^+$ are composed with $c^-$ and $c^+$ ($\composeapprox$ function at Line \lineref{line:compose_template}) to obtain an under-approximation $t^-$ (resp. over-approximation $t^+$) of $t$. 
%as well as an over-approximation $t^+$. 
Notice that the behavior of $\composeapprox$ depends on the monotonicity properties of $r$.
%These approximations can be parametrized either by a finite sequence $s$ of points or a minimax polynomial approximation of degree $d$ and are composed with $c^-$ and $c^+$ (so-called $\composeapprox$ function at Line~\lineref{line:compose_template}) to obtain an under-approximation $t^-$ as well as an over-approximation $t^+$. Notice that the behavior of $\composeapprox$ depends on the monotonicity properties of $r$.
%We emphasize that the semialgebraic arithmetic rules are analogous with interval calculus. 
%For instance, when the binary operation is the multiplication, we obtain a valid under-approximation by taking the minimum over the four possible products between an approximation of $c_1$ and an approximation of $c_2$: $c_{1}^-  c_{2}^-, c_{1}^+  c_{2}^-, c_{1}^-  c_{2}^+$ and $c_{1}^+  c_{2}^+$. 
%When $\bop$ is the division, we raise an exception when the sign of each of the approximations of $c_2$ is not constant. This exception can be handled by getting more accurate bounds with $\minsa$, using a higher order $k$. 
% Modifs : r^- and r^+ instead of par^- and par^+ 
% \uopapprox instead of \sampuopapprox
% \saapproxfun instead of  \buildtemplatefun
\begin{figure}[!t]
\begin{algorithmic}[1]                    
\Require tree $t$, semialgebraic $K$, semidefinite relaxation order $k$, precision $\precision$
\Ensure lower bound $m$,  upper bound $M$, lower semialgebraic approximation $t_2^-$, upper semialgebraic approximation $t_2^+$  
	\If {$t \in \setA$}\label{line:begin_template} $t^- := t$, $t^+ := t$
	\ElsIf {$\bop := \astroot (t)$ is a binary operation with children $c_1$ and $c_2$}
		\State $m_i, M_i, c_{i}^-, c_{i}^+ := \templateapproxfun{c_i}{K}{k}{\precision}$ for $i \in \{1,2\}$
	    \State $I_{2} := [m_2, M_2]$
		\State $t^-, t^+ := \composebop (c_{1}^-, c_{1}^+, c_{2}^-, c_{2}^+, \bop, I_{2})$ \label{line:samp_template1}
	\ElsIf {$r := \astroot (t) \in \setD$ with child $c$}	
		\State $m_c$, $M_c$, $c^-$, $c^+ := \templateapproxfun{c}{K}{k}{\precision}$ \label{line:samp_template}
		\State $I := [m_{c}, M_{c}]$
		\State $r^-, r^+ := \uopapprox (r, I, c, \precision)$
		\State $t^-, t^+ :=  \composeapprox (r, r^-, r^+, I, c^-, c^+)$\label{line:compose_template}
	\EndIf\label{line:end_template}
	\State $t_2^- := \saapproxfun {t}{K}{k}{\precision}{t^-}, t_2^+ := -\saapproxfun {t}{K}{k}{\precision}{-t^+}$\label{line:saapprox_template}
%	\State $\xb^-  := \vars (t_2^-, K), \xb^+ := \vars (t_2^+, K)$
	\State \Return $\minsa(t_2^-, K, k)$, $\maxsa(t_2^+, K,  k)$, $t_2^-$, $t_2^+$
\end{algorithmic}
\caption{$\templateapprox$}
\label{alg:template_approx}
\end{figure}
 %The algorithm is depicted in Figure~\ref{alg:build_quadratic_template}. Using a heuristics, it first builds candidate quadratic polynomials $q_j^-$ approximating $t$ at each point $\xb_j$ (function $\buildquadraticform$, previously described). Since each $q_j^-$ does not necessarily approximate the function $t$ from below, we then determine the lower bound $m_j^-$ of the semialgebraic function $t^- - q_j^-$, which ensures that $q_j^- + m_j^-$ is a quadratic lower-approximation of $t$. 
%\item 
%TODO to explain 
%This function can call two different procedures.
%The first method is an extension of the previous $\poptemplateoptim$ algorithm (see Sect.~\ref{sec:templatespop}, Figure~\ref{alg:poptemplateoptim}).
%\end{enumerate}
\subsection{Reducing the Complexity of Semialgebraic approximations}
The semialgebraic approximations previously computed are used to determine lower and upper bounds of the function associated with the tree $t$, at each step of the induction. The bounds are obtained by calling the functions $\minsa$ and $\maxsa$ respectively, which reduce the semialgebraic optimization problems to polynomial optimization problems by introducing extra lifting variables (see Section~\ref{sec:sa}).
However, the complexity of solving the SOS relaxations can grow significantly because of the number $\lift$ of lifting variables. If $k$ denotes the relaxation order, the corresponding SOS problem $Q_k$ indeed involves linear matrix inequalities of size $\binom{n +\lift + k}{k}$ over $\binom{n +\lift + 2 k}{2 k}$ variables. 
The complexity of the semialgebraic approximations is controlled with the function $\saapprox$ (Line~\lineref{line:saapprox_template}), when the number of lifting variables exceeds a user-defined threshold value $\lift^{\max}$. Consequently, this is crucial to control the number of lifting variables, or equivalently, the complexity of the semialgebraic approximations. For this purpose, we introduce two approximation schemes.
The first one is presented in Sect.~\ref{sec:quadr_templates}. It allows to compute approximations for some sub-components of the tree $t$ (or its under-approximation $t^-$) by means of suprema/infima of quadratic functions.
An alternative approach is to approximate these sub-components with degree-$d$ polynomial under-approximations, using the semidefinite relaxation described in Sect.~\ref{sec:sdp_L1}.
%An alternative approach is to consider the semidefinite relaxation described in Sect.~\ref{sec:sdp_L1}. In this case, the semialgebraic function $t^-$ is approximated with its degree-$d$ polynomial under-approximation $h_{dk}$.
%
%\vspace*{-1cm}
\subsubsection{Multivariate Maxplus Quadratic Templates}
\label{sec:quadr_templates}
Let $K \subset \R^n$ be a compact semialgebraic set and $f : K \to \R$ be a multivariate nonlinear function.
We consider the vector space $\Sn$ of real symmetric $n \times
n$ matrices. Given a matrix $M \in \Sn$, let $\lmax(M)$ (resp. $\lmin(M)$) be the maximum (\resp{}minimum) eigenvalue of $M$. In the sequel, we will often refer to the quadratic polynomial defined below. 
\begin{definition}
Let $\xb_c \in K$. The quadratic polynomial $f_{\xc, \lambda'}$ is given by:
\begin{align}
\label{eq:fxc}
    f_{\xb_c, \lambda'} : K \longrightarrow{} & {} \R \nonumber\\
    x \longmapsto{} & {} f(\xb_c) + \Df (\xb_c) \, (\xb - \xb_c)  \\
    & + \frac{1}{2}(\xb - \xb_c)^{T} \Hf (\xb_c)  (\xb - \xb_c) \nonumber\\
    & + \frac{1}{2} \lambda' \ltwonorm{\xb - \xb_c}^2 \enspace,  \nonumber
\end{align}
with,
\begin{equation}
\label{eq:lambdaK}
\lambda' \leq \lambda := \min_{\xb \in K} \{ {\lmin ( \Hf (\xb) - \Hf (\xb_c)  )} \}\enspace.
\end{equation}
\end{definition}
The quadratic polynomial $f_{\xc, \lambda'}$ is an under-approximation of $f$ on the set $K$:
\begin{lemma}
\label{th:tm_quadr_templates}
$\forall \xb \in K, \ f(\xb) \geq f_{\xb_c, \lambda'}$.
\end{lemma}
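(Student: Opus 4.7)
The plan is to recognize $f_{\xb_c, \lambda'}$ as the second-order Taylor polynomial of $f$ at $\xb_c$ regularized by an extra $\tfrac{1}{2}\lambda'\|\xb-\xb_c\|_2^2$ term, and to show that the Taylor remainder dominates this quadratic regularizer thanks to the uniform Hessian bound~\eqref{eq:lambdaK}.

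\textbf{Step 1: Taylor expansion with Lagrange remainder.} Assuming (as is the case in all applications of interest, where $K$ is a box or convex semialgebraic set) that the segment $[\xb_c,\xb]$ is contained in $K$, the standard Taylor theorem yields a point $\xi \in [\xb_c,\xb]\subset K$ such that
\begin{equation*}
f(\xb) = f(\xb_c) + \Df(\xb_c)(\xb - \xb_c) + \tfrac{1}{2}(\xb-\xb_c)^{T} \Hf(\xi)(\xb-\xb_c).
\end{equation*}
(Alternatively, the integral form $\int_0^1 (1-t)(\xb-\xb_c)^T \Hf(\xb_c + t(\xb-\xb_c))(\xb-\xb_c)\,dt$ works identically.)

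\textbf{Step 2: Subtract the Taylor polynomial at $\xb_c$.} Forming $f(\xb) - f_{\xb_c, \lambda'}(\xb)$ with the definition~\eqref{eq:fxc} cancels the zeroth and first-order terms, as well as the $\Hf(\xb_c)$ part, leaving
\begin{equation*}
f(\xb) - f_{\xb_c, \lambda'}(\xb) = \tfrac{1}{2}(\xb-\xb_c)^{T}\bigl[\Hf(\xi) - \Hf(\xb_c)\bigr](\xb-\xb_c) - \tfrac{1}{2}\lambda'\,\|\xb-\xb_c\|_2^2.
\end{equation*}

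\textbf{Step 3: Apply the spectral bound.} Since $\xi \in K$, the definition~\eqref{eq:lambdaK} gives $\lmin(\Hf(\xi) - \Hf(\xb_c)) \geq \lambda \geq \lambda'$, hence $\Hf(\xi) - \Hf(\xb_c) \succeq \lambda' I_n$. Rayleigh's inequality therefore yields
\begin{equation*}
(\xb-\xb_c)^{T}\bigl[\Hf(\xi) - \Hf(\xb_c)\bigr](\xb-\xb_c) \geq \lambda'\,\|\xb-\xb_c\|_2^2,
\end{equation*}
which combined with Step~2 proves $f(\xb) - f_{\xb_c,\lambda'}(\xb) \geq 0$, as required.

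\textbf{Main obstacle.} The only subtle point is justifying $\xi \in K$ so that the Hessian bound of~\eqref{eq:lambdaK} applies to $\Hf(\xi)$. This is immediate when $K$ is convex (which holds in all concrete instances considered, most notably for the boxes of the Flyspeck problems). Otherwise one should take $\lambda$ as the minimum over the convex hull of $K$, or restrict the statement to a convex neighbourhood of $\xb_c$ contained in $K$; in either case the remainder of the argument is unchanged.
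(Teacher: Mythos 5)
Your proof is correct and follows essentially the same route as the paper, whose own argument is just a one-line appeal to ``the first order Taylor expansion with the integral form for the remainder and the definition of the minimal eigenvalue''; you have simply written out that computation in full (using the Lagrange form, with the integral form noted as an equivalent alternative). Your closing remark about needing the segment $[\xb_c,\xb]$ to lie in $K$ (i.e.\ convexity of $K$, satisfied by the boxes in all applications) is a valid observation about an implicit hypothesis that the paper does not state.
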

\begin{proof}
It comes from the first order Taylor expansion with the integral form for the remainder and the definition of the minimal eigenvalue.
\qed
\end{proof}
%We first recall some basic definitions.
\begin{definition}
\label{def:spectralradius}
Given a symmetric real-valued matrix $M \in \Sn$,  the spectral radius of $M$ is given by $\rho(M) := \max (\lmax(M), -\lmin(M) )$.
\end{definition}
In the sequel, we use the following inequality:
\begin{equation}
\label{eq:specrad_ineq}
\rho(M) \leq \lonenorm{M} := \max_{x \neq 0} \dfrac{\lonenorm{M x} }{\lonenorm{x}} \enspace.
\end{equation}
Now, we explain how to approximate $f$ from below over $K$ with quadratic polynomials as in~\eqref{eq:fxc}. %and robust Semidefinite programming. 
%Notice that the entries of the matrix $( \Hf (\xb) - \Hf (\xb_c)  )$ are degree-$(d - 2)$ polynomials. 
To approximate from below the value of $\lambda$, we determine an interval matrix
$\widetilde{\Hf} := ([\dijlo, 
\dijup ])_{1 \leq i, j \leq n}$, containing coarse bounds of the Hessian difference entries, using interval arithmetic. 
%using SOS techniques on the entries of the Hessian difference on K. 
%It leads to SDP relaxations of minimal order $(\kmin - 1)$. 
Let consider the following interval matrix minimal eigenvalue problem:
\begin{equation}
\label{eq:lminhessian}
\lambda' := \lmin (\widetilde{\Hf}) \enspace.
\end{equation}
%
%\subsubsection*{Lower Bounds of Interval Matrix Minimal Eigenvalues}
%\label{sec:lmin_approx}
%
Different approximations of $\lambda$ can be considered.
\subsubsection*{Tight lower bound of $\lambda$}
For each interval $[\dijlo, \dijup]$, we define the symmetric matrix $B$:
\[B_{i j} := \max \{\mid \dijlo \mid, \mid \dijup \mid \}, \enspace 1 \leq i, j \leq n \enspace.\]
Let $\mathcal{S}^n$ be the set of diagonal matrices of sign:
\[\mathcal{S}^n := \{\diag (s_1,\dots,s_n), \ s_1 = \pm 1,\dots\,s_n = \pm 1 \}\enspace. \]
Next, one specializes the result in~\cite[Theorem 2.1]{springerlink:10.1007/s10957-008-9423-1} for robust optimization procedure with reduced vertex set.
\begin{lemma}%[Interval matrix eigenvalue optimization with reduced vertex set]
\label{th:lminhessian} 
The robust interval SDP Problem~\eqref{eq:lminhessian} is equivalent to 
the following single variable SDP:
\[	
\left\{
\begin{array}{lrl}
\min\limits_t & -t \enspace, &\\
\st & -t I  - S  B  S & \succeq 0 \enspace, \\
& S & = 
\begin{pmatrix}
  1 & 0\\
  0 & S' 
\end{pmatrix}
, \quad \forall S' \in \mathcal{S}^{n - 1} \enspace.
\end{array}\right.
\]			
Let $\lambda_1'$ be the solution of this single variable SDP. Then, $\lambda_1' \leq \lambda.$
\end{lemma}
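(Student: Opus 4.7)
The proof splits into two parts: deriving the SDP reformulation, then establishing the bound $\lambda_1' \leq \lambda$. The bridge between them is an envelope enlargement step that replaces the interval matrix $\widetilde{\Hf}$ by a matrix interval symmetric around zero, which is precisely the setting in which the robust SDP of~\cite[Theorem~2.1]{springerlink:10.1007/s10957-008-9423-1} applies.

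First I would introduce the symmetric envelope $\widetilde{B} := \{ M \in \Sn : |M_{ij}| \leq B_{ij}, \; 1 \leq i,j \leq n\}$. Since $B_{ij} = \max(|\dijlo|, |\dijup|)$, every matrix $M \in \widetilde{\Hf}$ satisfies $|M_{ij}| \leq B_{ij}$, so $\widetilde{\Hf} \subseteq \widetilde{B}$. Taking minima of eigenvalues over nested sets reverses inclusion, hence $\lmin(\widetilde{B}) \leq \lmin(\widetilde{\Hf}) = \lambda'$. Moreover, by construction of the entry-wise enclosure, each Hessian difference $\Hf(\xb) - \Hf(\xc)$ with $\xb \in K$ belongs to $\widetilde{\Hf}$, so its minimum eigenvalue is bounded below by $\lambda'$, and taking the infimum over $\xb \in K$ gives $\lambda' \leq \lambda$.

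Next I would apply~\cite[Theorem~2.1]{springerlink:10.1007/s10957-008-9423-1} to $\widetilde{B}$. The envelope is centered at the zero matrix with entry-wise radius $B$, so the cited robust optimization result identifies the worst-case eigenvalue at vertex matrices of the form $-SBS$, with $S$ a diagonal sign matrix. Concretely, $\lmin(\widetilde{B}) = -\max_{S \in \mathcal{S}^n} \lmax(SBS)$, and the inequality $-t \geq \lmax(SBS)$ translates into the LMI $-tI - SBS \succeq 0$. This produces a single-variable SDP whose optimal value equals $\lmin(\widetilde{B})$. The reduction from $2^n$ to $2^{n-1}$ constraints is obtained by the observation that $SBS = (-S)B(-S)$, which allows one to fix $s_1 = +1$, i.e., $S = \text{diag}(1, S')$ with $S' \in \mathcal{S}^{n-1}$, without loss of generality. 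Combining with the chain established in the previous step yields
\[
\lambda_1' \;=\; \lmin(\widetilde{B}) \;\leq\; \lambda' \;\leq\; \lambda \enspace.
\]

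The main obstacle is invoking the cited robust SDP characterization correctly: one must verify that the absolute-value envelope $\widetilde{B}$ (as opposed to a centered enclosure with finite radius) fits the hypotheses of~\cite[Theorem~2.1]{springerlink:10.1007/s10957-008-9423-1}, which requires that the envelope is centered at zero — this is guaranteed here precisely because $B_{ij}$ was chosen as the largest absolute value. A secondary technical point is checking that the sign-matrix vertex parametrization exhausts the worst-case direction of $\widetilde{B}$; this follows from a standard majorization $v^T M v \leq \sum_{i,j} B_{ij} |v_i v_j|$ applied to any eigenvector of $-M \in \widetilde{B}$, matched by the vertex $-B$ itself.
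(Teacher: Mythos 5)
Your proposal is correct and takes essentially the same route as the paper, which offers no written argument beyond ``one specializes \cite[Theorem 2.1]{springerlink:10.1007/s10957-008-9423-1}''; you supply the missing details (the zero-centered envelope $\widetilde{B}$ with radius $B$, the symmetry $SBS=(-S)B(-S)$ that justifies fixing $s_1=+1$, and the chain $\lambda_1'=\lmin(\widetilde{B})\leq\lambda'\leq\lambda$). Two minor remarks: your own argument shows the single-variable SDP computes $\lmin(\widetilde{B})$, which is only a lower bound for Problem~\eqref{eq:lminhessian} unless the interval matrix is already symmetric about zero, so the lemma's word ``equivalent'' should really be read as ``a valid relaxation of''; and in your majorization aside the matching extremal vertex is $-SBS$ for the sign pattern $S$ of the optimizing eigenvector, not $-B$ itself.
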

However, solving the semidefinite program given in Lemma~\ref{th:lminhessian} introduces a subset of sign matrices of cardinal $2^{n - 1}$, thus reduces the problem to a manageable size only if $n$ is small. 
\subsubsection*{Coarse lower bound of $\lambda$}
Here, one writes $\hessI := X + Y$, where $X$ and $Y$ are defined as follows:
\[ X_{i j} := \Bigl[\mI{\dijlo}{\dijup}, \mI{\dijlo}{\dijup} \Bigr], \quad  Y_{i j} :=\Bigl[ -\dI{\dijup}{\dijlo}, \dI{\dijup}{\dijlo}\Bigr] \enspace.\]
\begin{proposition}
Define $\lambda_2' := \lmin (X) - \max_{1 \leq i \leq n} \Bigl\{ \sum_{j = 1}^n \dI{\dijup}{\dijlo} \Bigr\}$. Then, $\lambda_2' \leq \lambda.$
\end{proposition}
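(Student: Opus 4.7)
The plan is to show that for every $\xb\in K$, the symmetric matrix $\Hf(\xb)-\Hf(\xc)$ admits a decomposition compatible with the splitting $\hessI = X + Y$, and then control its minimum eigenvalue via Weyl's inequality combined with the bound~\eqref{eq:specrad_ineq}. Taking the minimum over $\xb\in K$ will yield the claim.

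More precisely, I would first fix $\xb\in K$. By construction of $\hessI$, each entry of $\Hf(\xb)-\Hf(\xc)$ lies in the corresponding interval $[\dijlo,\dijup]$. Writing this entry as the midpoint $\mI{\dijlo}{\dijup}$ plus a correction in $[-\dI{\dijup}{\dijlo},\dI{\dijup}{\dijlo}]$, I obtain a decomposition
\[
\Hf(\xb)-\Hf(\xc) = X + Y_{\xb},
\]
where $X$ is the fixed symmetric midpoint matrix defined in the excerpt and $Y_{\xb}\in \Sn$ is a symmetric perturbation with $|(Y_{\xb})_{ij}|\leq \dI{\dijup}{\dijlo}$.

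The next step is to apply Weyl's monotonicity inequality: for any two symmetric matrices $A,B\in\Sn$, $\lmin(A+B)\geq \lmin(A)+\lmin(B)$. This gives
\[
\lmin\bigl(\Hf(\xb)-\Hf(\xc)\bigr) \;\geq\; \lmin(X) + \lmin(Y_{\xb}).
\]
Since $Y_{\xb}$ is symmetric, $\lmin(Y_{\xb})\geq -\rho(Y_{\xb})$ by Definition~\ref{def:spectralradius}. Using the inequality~\eqref{eq:specrad_ineq}, $\rho(Y_{\xb})\leq \lonenorm{Y_{\xb}}$, and by the entrywise bound above (and by the fact that for a symmetric matrix the operator $1$-norm equals the maximum absolute row sum),
\[
\lonenorm{Y_{\xb}} \;\leq\; \max_{1\leq i\leq n}\sum_{j=1}^{n}\dI{\dijup}{\dijlo}.
\]

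Combining these bounds yields, for every $\xb\in K$,
\[
\lmin\bigl(\Hf(\xb)-\Hf(\xc)\bigr) \;\geq\; \lmin(X) - \max_{1\leq i\leq n}\sum_{j=1}^{n}\dI{\dijup}{\dijlo} \;=\; \lambda_2'.
\]
Taking the infimum of the left-hand side over $\xb\in K$ gives $\lambda\geq \lambda_2'$, which is the desired inequality. The only subtlety I expect is the identification of the operator $1$-norm with the maximum absolute row sum for symmetric matrices, which is standard but must be invoked explicitly so that the bound on $\lonenorm{Y_{\xb}}$ matches the expression defining $\lambda_2'$; the rest is a direct application of Weyl's inequality and the given definitions.
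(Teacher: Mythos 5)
Your proposal is correct and follows essentially the same route as the paper: both rest on the superadditivity of $\lmin$ (Weyl's inequality) applied to the midpoint/radius splitting $X+Y$, followed by the spectral radius bound $\rho(M)\leq\lonenorm{M}$ to control the perturbation term by the maximum absolute row sum of the half-widths. Your version is merely more explicit in working with a pointwise realization $Y_{\xb}$ for each $\xb\in K$ and taking the infimum at the end, which is a welcome clarification of what the paper leaves implicit when it writes $\lmin$ of an interval matrix.
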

\begin{proof}
By concavity and homogeneity of the $\lmin$ function, one has:
\begin{equation}
\lmin(X + Y) \geq \lmin (X) + \lmin (Y) = \lmin (X) - \lmax (-Y)\enspace.
\end{equation}
Using Proposition~\ref{eq:specrad_ineq} yields $\lmax(-Y) \leq \max_{1 \leq i \leq n} \Bigl\{ \sum_{j = 1}^n \dI{\dijup}{\dijlo} \Bigr\}$.
\if{
\begin{equation}
\lmax(-Y) \leq \max_{1 \leq i \leq n} \Bigl\{ \sum_{j = 1}^n \dI{\dijup}{\dijlo} \Bigr\}\enspace.
\end{equation}
}\fi
\qed
\end{proof}
The matrix $X$ is real valued and symmetric matrix, thus one can compute its minimal eigenvalue with the classical semidefinite program: $\min \{ -t : X  \succeq t I \}.$
\if{
\[	
\left\{
\begin{array}{ll}
\min & -t \\
\st & X -t I \succeq 0 \enspace.
\end{array}\right.
\]
}\fi
Finally, we can compute a coarse certified lower bound $\lambda_2'$ of $\lambda$ with a procedure which is polynomial in $n$. 

Now we describe the procedure $\saapprox$ in case of using the multivariate maxplus quadratic templates defined in this section, assuming that the precision $p$ is determined by a sequence of points $s$.
For each point $\xb_c$ of the sequence $s$, a sub-routine returns the polynomial defined in~\eqref{eq:fxc}. In particular, one has $f_{\xb_c, 1} := \tmalgofun {f} {\xb_c} {\lambda_1'}$, which is built with the tight eigenvalue approximation $\lambda_1'$.  Similarly, 
$f_{\xb_c, 2}$ is defined with the coarse eigenvalue approximation $\lambda_2'$. Since each $f_{\xb_c, i}$ does not necessarily approximate $f$ from below, we determine a lower bound $m_c$ of the function $t^- - f_{\xb_c, i}$, which ensures that  $t_2^- := \max\limits_{\xb_c \in s} \{f_{\xb_c, i} + m_c \}$ is a valid lower approximation of $f$.
%In the sequel, the input index $i$ ($i=1$ or 2) refers to one of the previous eigenvalue approximation methods. 

\subsubsection{Polynomial under-approximations for Semialgebraic Functions}
\label{sec:l1_under}

Given a box $K \subset \R^n$, we consider a semialgebraic sub-component $\fsa : K \to \R$ of the abstract syntax tree of $f$.
%, we consider an instance of Problem~\eqref{eq:f}, where $\fsa$ is involved. 
A common way to represent $\fsa$ is to use its semialgebraic lifting, which leads to solve semialgebraic optimization problems with a possibly large number of lifting variables $\lift$.
One way to reduce this number is to approximate $\fsa$ from below with a degree-$d$ polynomial $h_d$, which should involve less variables than $\lift$. This section describes how to obtain such an $h_d$, which has the property to minimize the $\lone$ norm of the difference $(\fsa - h)$, over all degree-$d$ polynomial under-approximations $h$ of $\fsa$. We exploit a technique of Lasserre and Thanh~\cite{cvxunder}, who showed how to obtain convex under-approximations of polynomials. 
Here, we derive a similar hierarchy of SOS relaxations, whose optimal solutions are the best (for the $\lone$ norm) degree-$d$ (but possibly non convex) polynomial under-approximations of $t$ on $K$. We assume without loss of generality that 
$K$ is the unit ball $[0, 1]^n$. By comparison with~\cite{cvxunder}, 
the main difference is that the input is a semialgebraic function, rather than a polynomial.

\paragraph{Best polynomial under-approximations of semialgebraic functions for the $\lone$ norm.}

Let $\fsa : [0, 1]^n \to \R$ be a semialgebraic component of $f$ and $\lambda_n$ be the standard Lebesgue measure on $\R^n$, which is normalized so
that $\lambda_n([0, 1]^n)=1$. Define $g_1 := x_1 (1 - x_1), \dots, g_n := x_n (1 - x_n)$.
The function $\fsa$ has a basic semialgebraic lifting, thus there exist $p, s \in \N$, polynomials $g_{n + 1}, \dots , g_{n + s} \in \R[\xb, z_1,\dots,z_p]$ and a basic semialgebraic set $\Kpop$ defined by:
\[\Kpop :=  \{ (\xb, \zb) \in \R^{n+p} : g_{1}(\xb, \zb) \geq 0,\dots, g_{m}(\xb, \zb) \geq 0,  g_{m + 1}(\xb, \zb) \geq 0\} \enspace, \]
such that the graph $\Psi_{\fsa}$ satisfies:
\[\Psi_{\fsa} := \{ (\xb, \fsa(\xb)) : \xb \in K \} = \{ (\xb, z_p) : (\xb,  \zb) \in \Kpop\} \enspace, \]
with $m := n + s$ and $g_{m + 1} := M - \ltwonorm{\zb}^2$, for some positive constant $M$ obtained  by adding bound constraints over the lifting variables $\zb$ (to ensure that the module $QM(\Kpop)$ is Archimedean).
Define the polynomial $\fpop(\xb, \zb) := z_p$ and the total number of variables $\npop := n + p$. 

Consider the following optimization problem with optimal value $m_d$:

\[
(\Psa) \left\{			
\begin{array}{ll}
\min\limits_{h \in \R_d[\xb]  } & \displaystyle\int_K (\fsa - h) d \lambda_n \\	
\text{s.t.} & \fsa - h \geq 0  \textrm{ on }  K \enspace.
\end{array} \right.
\]

\begin{lemma}
\label{th:psa}
Problem $(\Psa)$ has a degree-$d$ polynomial minimizer $h_d$.
\end{lemma}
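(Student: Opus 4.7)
The plan is to view $(\Psa)$ as the minimization of a continuous linear functional over a closed convex subset of the finite-dimensional vector space $V := \R_d[\xb]$, and to invoke the Weierstrass theorem by checking that the objective is coercive on the feasible set.

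First, I would set $\Phi(h) := \int_K (\fsa - h)\, d\lambda_n = \int_K \fsa\, d\lambda_n - \int_K h\, d\lambda_n$ and $C := \{h \in V : \fsa(\xb) - h(\xb) \geq 0 \text{ for all } \xb \in K\}$. Since the coordinate projections $h \mapsto h(\xb)$ are continuous linear functionals on $V$ for each fixed $\xb$, the set $C$ is an intersection of closed half-spaces, hence closed and convex in $V$. Non-emptiness is immediate: $\fsa$ is continuous on the compact set $K = [0,1]^n$, so the constant polynomial $h_0 \equiv \min_{\xb \in K}\fsa(\xb)$ lies in $C$. Moreover, $\Phi$ is affine and hence continuous on $V$.

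The main step is coercivity. For $h \in C$, the non-negativity of $\fsa - h$ on $K$ yields
\[
\Phi(h) = \int_K (\fsa - h)\, d\lambda_n = \int_K |\fsa - h|\, d\lambda_n \geq \lonenormK{h} - \lonenormK{\fsa},
\]
by the reverse triangle inequality. Now, $\lonenormK{\cdot}$ is a seminorm on the finite-dimensional space $V$, and it is actually a norm: any polynomial vanishing $\lambda_n$-almost everywhere on $K$ vanishes on the open set $(0,1)^n$, hence vanishes identically. By equivalence of norms on the finite-dimensional space $V$, $\lonenormK{\cdot}$ is equivalent to any reference norm $\|\cdot\|_V$ on $V$. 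Thus $\Phi(h) \to +\infty$ whenever $\|h\|_V \to \infty$ along $C$.

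Concluding, the sublevel sets $\{h \in C : \Phi(h) \leq \Phi(h_0)\}$ are closed and bounded in $V$, hence compact. The continuous function $\Phi$ attains its minimum on this compact set, which gives a minimizer $h_d \in C$ of $\Phi$ over $C$, proving the claim. The only delicate point is the coercivity argument, which crucially relies on $K$ having non-empty interior in order to promote the $L^1(K)$ seminorm on polynomials to a genuine norm; everything else is routine finite-dimensional convex analysis.
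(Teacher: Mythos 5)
Your proof is correct and follows essentially the same route as the paper's: identify the objective on the feasible set with $\lonenormK{\fsa - h}$, use the triangle inequality to bound $\lonenormK{h}$ on sublevel sets, invoke equivalence of norms on the finite-dimensional space $\R_d[\xb]$ to get compactness of closed sublevel sets, and conclude by continuity. You supply two details the paper leaves implicit --- non-emptiness of the feasible set and the fact that $\lonenormK{\cdot}$ is a genuine norm on polynomials because $K$ has non-empty interior --- but the argument is the same.
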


For a proof, see Appendix~\ref{pr:psa}. Now, define  $QM(\Kpop)$ to be the quadratic module associated with $g_1, \dots, g_{m + 1}$. As a consequence of Putinar's Positivstellensatz for Archimedean quadratic modules~\cite{putinar1993positive}, the optimal solution $h_d$ of $(\Psa)$ is a maximizer of the following problem:

\[
(P_d) \left\{			
\begin{array}{ll}
\max\limits_{h \in \R_d[\xb]  } & \displaystyle\int_{[0, 1]^n} h \ d \lambda_n \\	
\text{s.t.} & (\fpop - h) \in QM(\Kpop) \enspace.
\end{array} \right.
\]

Let $\mu_d$ be the optimal value of $(P_d)$. Then, one has $m_d = \int_K \fsa \ d \lambda  - \mu_d$. 

\paragraph{Convergent hierarchy of SOS relaxations.}
\label{sec:sdp_L1}

We write $h = \sum_{\alphab \in \mons{d}{n}} h_{\alphab} \xb^{\alphab}$, with $\mons{d}{n} := \{ \alphab \in \N^n : \sum_{i=1}^n \alpha_i \leq d \}$.
Let $\omegatilde{0} :=  \lceil (\deg g_0) / 2 \rceil, \dots, \omegatilde{m+1} :=  \lceil (\deg g_{m + 1}) / 2 \rceil$ and
define $k_0 := \max \{\lceil d / 2 \rceil, \lceil (\deg \fpop) / 2 \rceil, \omegatilde{0}, \dots, \omegatilde{m+1} \}$.
Now, consider the following SOS relaxation $(P_{d k})$ of $(P_d)$, with optimal value $\mu_{d k}$:
\[
(P_{d k}) \left\{			
\begin{array}{ll}
\max\limits_{h \in \R_d[\xb], \sigma_j} & \sum_{\alphab \in \mons{d}{n}} h_{\alphab} \gamma_{\alphab} \\			
\text{s.t.} & \fpop (\xb, \zb) = h (\xb) + \displaystyle\sum_{j=0}^{m+1} \sigma_j(\xb, \zb) g_j (\xb, \zb), \enspace \forall (\xb, \zb) \enspace, \\
            & \sigma_j \in \Sigma_{k - \omegatilde{j}}[\xb, \zb], \enspace 0 \leq j \leq m + 1 \enspace,
\end{array} \right.
\]
with $k \geq k_0$ and $\gamma_{\alphab} := \int_{[0, 1]^n} \xb^{\alphab} d \xb$ for all $\alphab \in \mons{d}{n}$.
%Notice that $h = \sum_{\alphab \in \mons{d}{n}} h_{\alphab} \xb^{\alphab}$. Then, the objective function of the optimization problem $(P_{d k})$ can be written $\sum_{\alphab \in \mons{d}{n}} h_{\alphab} \gamma_{\alphab}$,
% with $\gamma_{\alphab} := \int_{[0, 1]^n} \xb^{\alphab} d \xb$ for all $\alphab \in \mons{d}{n}$.
%
This problem is an SOS program with variables $(h_d, \sigma_0, \dots, \sigma_{m + 1})$.
Let $m_d$ be the optimal value of Problem~$(\Psa)$. As in~\cite{cvxunder}, the optimal value of the SOS relaxation $(P_{d k})$ can become as close as desired to $m_d - \fsa^*$.
\begin{theorem}
\label{th:cvg_sos_l1}
The sequence $(\int_K \fsa d \lambda - \mu_{d k})_{k \geq k_0}$ is non-increasing and converges to $m_d$. Moreover, if $h_{d k}$ is a maximizer of $(P_{d k})$, then the sequence $(\lonenorm{\fsa - h_{d k}})_{k \geq k_0}$ is non-increasing and converges to $m_d$. Furthermore, any accumulation point of the sequence $(h_{d k})_{k \geq k_0}$ is an optimal solution of Problem~$(\Psa)$.
\end{theorem}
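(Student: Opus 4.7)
}
The argument adapts the strategy of Lasserre and Thanh~\cite{cvxunder} to the semialgebraic setting, the key point being that in the lifted formulation one has $\fpop(\xb,\zb)=z_p=\fsa(\xb)$ for every $(\xb,\zb)\in \Kpop$ (by definition of a basic semialgebraic lifting). The plan is to establish, in order: (i) monotone convergence of $\mu_{dk}$ to $\mu_d$, (ii) the identity $\lonenormK{\fsa-h_{dk}}=\int_K \fsa\,d\lambda-\mu_{dk}$, and (iii) compactness of the sequence $(h_{dk})$ combined with a limit argument.

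First, for (i), I observe that $\Sigma_{k-\omegatilde{j}}[\xb,\zb]\subseteq \Sigma_{k+1-\omegatilde{j}}[\xb,\zb]$, so the feasible set of $(P_{dk})$ grows with $k$, making the sequence $(\mu_{dk})_{k\geq k_0}$ non-decreasing. Moreover, any feasible pair $(h,(\sigma_j))$ of $(P_{dk})$ yields $\fpop-h\in QM(\Kpop)$, so $h$ is feasible for $(P_d)$ and $\mu_{dk}\leq \mu_d$. To obtain the reverse inequality in the limit, I apply Putinar's Positivstellensatz: since $QM(\Kpop)$ is archimedean (thanks to the added ball constraint $g_{m+1}\geq 0$) and since $\fpop-h_d=\fsa(\xb)-h_d(\xb)\geq 0$ on $\Kpop$, for every $\epsilon>0$ the polynomial $\fpop-(h_d-\epsilon)$ is strictly positive on $\Kpop$ and hence belongs to $QM(\Kpop)$. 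Consequently there exists $k(\epsilon)\geq k_0$ such that $h_d-\epsilon$ is feasible for $(P_{d,k(\epsilon)})$, giving $\mu_{dk}\geq \mu_d-\epsilon$ for $k\geq k(\epsilon)$. Letting $\epsilon\downarrow 0$ yields $\mu_{dk}\to \mu_d$, so $(\int_K \fsa\,d\lambda-\mu_{dk})_{k\geq k_0}$ is non-increasing and converges to $\int_K \fsa\,d\lambda-\mu_d=m_d$.

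For (ii), I exploit again the identity $\fpop(\xb,\zb)=\fsa(\xb)$ on $\Kpop$: any feasible $h_{dk}$ of $(P_{dk})$ satisfies $\fpop-h_{dk}\in QM_k(\Kpop)$, which is a subset of the polynomials nonnegative on $\Kpop$; restricting to $(\xb,\zb)\in \Kpop$ gives $\fsa(\xb)\geq h_{dk}(\xb)$ on $K$. The objective value of $(P_{dk})$ at $h_{dk}$ is $\sum_{\alphab\in\mons{d}{n}} h_{dk,\alphab}\gamma_{\alphab}=\int_{[0,1]^n} h_{dk}\,d\lambda_n=\mu_{dk}$, and therefore $\lonenormK{\fsa-h_{dk}}=\int_K(\fsa-h_{dk})\,d\lambda=\int_K \fsa\,d\lambda-\mu_{dk}$. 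The second claim is then an immediate consequence of (i).

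For (iii), I need compactness of the sequence $(h_{dk})$ in the finite-dimensional space $\R_d[\xb]$. Since $h_{dk}\leq \fsa$ on $K$, the polynomials are uniformly bounded above on $K$; combined with the fact that $\int_K h_{dk}\,d\lambda=\mu_{dk}\to\mu_d$ is bounded below, one classically deduces a uniform $\linf$-bound of $h_{dk}$ on $K$, and hence (by equivalence of norms on $\R_d[\xb]$ restricted to a set with non-empty interior) a uniform bound on the coefficients. Extracting a subsequence $h_{dk_l}\to h^*$ in $\R_d[\xb]$, uniform convergence on $K$ preserves the inequality $h^*\leq \fsa$ on $K$ and gives $\int_K h^*\,d\lambda=\lim_l \mu_{dk_l}=\mu_d$, so $h^*$ is feasible for $(\Psa)$ with objective value $m_d$, hence optimal by Lemma~\ref{th:psa}. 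The main obstacle I anticipate is the Putinar application in step (i): the optimizer $h_d$ of $(\Psa)$ only provides a non-strictly positive polynomial, so the $\epsilon$-shift is essential and must be coupled with the use of $\fpop=\fsa$ on $\Kpop$ rather than on the larger set where only inequalities hold.
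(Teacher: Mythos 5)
Your proposal is correct and follows essentially the same route as the paper, which simply defers to the proof of Theorem~3.3 in Lasserre and Thanh~\cite{cvxunder}: monotonicity of the truncated quadratic modules, the $\epsilon$-shift plus Putinar's Positivstellensatz for the convergence $\mu_{dk}\to\mu_d$, the identification $\lonenorm{\fsa-h_{dk}}=\int_K\fsa\,d\lambda-\mu_{dk}$ via $\fpop=\fsa$ on $\Kpop$, and the norm-equivalence compactness argument (the same one the paper uses to prove Lemma~\ref{th:psa}) for the accumulation points. In effect you have written out the details that the paper leaves implicit; I see no gap.
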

\begin{proof}
\label{pr:cvg_sos_l1}
The proof is analogous with~\cite[Theorem~3.3]{cvxunder}. 
\qed
\end{proof}
\paragraph{Numerical experiments.}

We present the numerical results obtained when computing the best degree-$d$ polynomial under-approximations of semialgebraic functions for the $\lone$ norm, using the techniques presented in Sect.~\ref{sec:sdp_L1}. The sequence of lower bounds $(\mu_{d k})$ is computed by solving the SOS relaxations $(P_{d k})$. The ``tightness'' score $\lonenorm{\fsa - h_{d k}}$ evaluates the quality of the approximation $h_{d k}$, together with its lower bound $\mu_{d k}$.
%Given a semialgebraic function $\fsa$ defined on a compact semialgebraic set $K$ and an approximation degree $d$, we approximate $\fsa$ from below by a degree $d$ polynomial obtained at the relaxation order $k$ of $(P^{\text{sparse}}_{d k})$ (denoted by $h_{d k}$). This polynomial $h_{d k}$ only depends on the variables $\xb$. The ``tightness'' score $\lonenorm{\fsa - h_{d k}}$ evaluates the quality of the approximation $h_{d k}$, together with its lower bound $\mu_{d k}$. The semidefinite relaxations of Problem~$(\Psa)$ have been implemented with $\ocaml$ (using $\sdpa$), on an Intel Core i5 CPU ($2.40\, $GHz).
%
\begin{example}
\label{ex:9922_sa_l1}
In Example~\ref{ex:9922_sa}, we obtained lower bounds for the semialgebraic function  $\fsa := \frac{\partial_4 \Delta \xb }{\sqrt{4 x_1 \Delta \xb}}$, using two lifting variables.
%One can easily obtain the lower bound $m_2 = -0.618$ of $\fsa$ at the second relaxation (\resp{}$m_3 = -0.445$ at the third relaxation), using two lifting variables that represent $\sqrt{4 x_1 \Delta \xb}$ and $\frac{\partial_4 \Delta \xb }{\sqrt{4 x_1 \Delta \xb}}$, as well as additional inequality constraints.
However, when solving inequalities involving $\fsa$, one would like to solve POP that do not necessarily include these two lifting variables and the associated constraints. 
\begin{table}[!ht]
\begin{center}
\caption{Comparing the tightness score $\lonenorm{\fsa - h_{d k}}$ and $\mu_{d k}$ for various values of $d$ and $k$}
\footnotesize
\begin{tabular}{l|l|c|r}
\hline
$d$ & $k$ & Upper bound of $\lonenorm{\fsa - h_{d k}}$ & $\mu_{d k}$\\
\hline
\multirow{2}{*}{2} & 2 & 0.8024 &  -1.171 \\
                   & 3 & 0.3709 & -0.4479 \\
\hline
\hline
\multirow{2}{*}{4} & 2 & 1.617 &  -1.056 \\
                   & 3 & 0.1766 &  -0.4493 \\
\hline
6                  & 3 & 0.08826 & -0.4471 \\
\hline
\end{tabular}
\label{table:l1_under}
\end{center}
\end{table}
Table~\ref{table:l1_under} displays the tightness scores and the lower bounds of the approximations obtained for various values of the approximation degree $d$ and the relaxation order $k$. Notice that $\mu_{d k}$ only bounds from below the actual infimum $h_{d k}^*$ of the under-approximation $h_{d k}$. It requires a few seconds to compute approximations at $k = 2$ against 10 minutes at $k = 3$, but one shall consider to take advantage of replacing $\fsa$ by its approximation $h_{6 3}$ to solve more complex POP.
\end{example}
As an alternative to the method proposed in Sect.~\ref{sec:quadr_templates}, the procedure $\saapprox$ can return the polynomial under-approximation $h_{d k}$ while solving the SOS relaxation $(P_{d k})$.

\if{
\begin{example}
\label{ex:l1_under_3d}
To illustrate the method, we consider the six variables function $\radx$ issued from Flyspeck and its projection $\radx_2$ with respect to the first two coordinates $(x_1, x_2)$ on the box $[4, 8]^2$ (we instantiated the remaining variables by the constant value $8$):
\[\radxtwo : (x_1, x_2) \mapsto  \frac{- 64 x_1^2 + 128 x_1 x_2 + 1024 x_1 - 64 x_2^2   + 1024 x_2 - 4096} 
{- 8 x_1^2 + 8 x_1 x_2 + 128 x_1 - 8 x_2^2 + 128 x_2 - 512} \enspace.\]

In Figure~\ref{fig:l1_under_3d} is displayed $\radxtwo$ after scaling on $[0, 1]^2$ (the red surface), as well as the linear under-approximation $h_{1 3}$ (blue surface) and quadratic under-approximation $h_{2 3}$ (green surface). Both under-approximations are obtained at the third relaxation order.
\end{example}

\begin{figure}[!ht]
\centering
\includegraphics[scale=1]{l1_under_3d.pdf}
\caption[Linear and Quadratic Polynomial under-approximations for $\radxtwo$]{Linear and Quadratic Polynomial Under-approximations for the $\radxtwo$  function}	\label{fig:l1_under_3d}
\end{figure}
}\fi

\subsection{A Semialgebraic Template Optimization Algorithm}

Our main optimization algorithm $\templateoptim$ is an iterative procedure which relies on $\templateapprox$. At each iteration step, the global precision parameter $p \in \precset$ is updated dynamically.
A convenient way to express the refinement of the precision,
for the general nonlinear template approximation scheme (see Fig.~\ref{alg:template_approx}), is to use the vocabulary of nets. We recall the following definitions, using~\cite{nagata1974modern}:

\begin{definition}
\label{def:dirset}
A directed set is a set $\dirset$ with a relation $\leq$ which is reflexive, transitive and directed, \ie{}for each $a, b \in \dirset$, there exists some $c \in \dirset$ such that $a \leq c$ and $b \leq c$.
\end{definition}

\begin{definition}
\label{def:net}
A net in a set $X$ is a map $\lambda : \dirset \to X$. If $X$ is a topological space, we say that the net $\lambda$ converges to $x \in X$ and write $\lambda \to x$ if and only if for every neighborhood $U$ of $x$, there exists some tail $\Lambda := \{\lambda(c) : d \leq c \in \dirset\}$ such that $\Lambda \subseteq U$.
\end{definition}

We represent the precision $p$ by an element of a directed set $\precset$. When using minimax polynomial approximations to approximate an univariate function on a given interval $I$, the sequence of approximation degrees defines the net. For the maxplus approximations, the net is the set of finite subsets of $I$. 

Let $c_1, \dots, c_l$ be the components of the tree $t$, on which one calls approximation algorithms with respective precisions $p_1 \in \precset_1, \dots, p_l \in \precset_l$. Let $\precset = \precset_1 \times \dots \times \precset_l$ be the set of precisions, ordered with the product order. 

Our main optimization algorithm $\templateoptim$, relies on $\templateapprox$ and updates the global precision parameter $p \in \precset$ dynamically at each step of an iteration procedure (Line~\lineref{line:randeval}). 

\begin{figure}[!t]                    
\begin{algorithmic}[1]                  
\Require abstract syntax tree $t$, semialgebraic set $K$,  $\itermax$ (optional argument), precision $p$
\Ensure lower bound $m$
 	\State $s := [\argmin (\randeval (t)) ]$ \Comment{$s \in K$} \label{line:randeval}
 	\State $m := -\infty$
 	\State $\iter := 0$
 	%\State Choose a minimal (\resp{}maximal) degree $\dmin$ (\resp{}$\dmax$) for polynomial approximations\label{line:dmin_dmax}
	\While {$\iter \leq \itermax$}\label{line:begin_optim}
	%\State $k_0:= \sosmindeg (d, K)$ \label{line:sosmindeg}
	\State Choose an SOS relaxation order $k \geq k_0$\label{sdp_order}
	\State $m, M, t^-, t^+ := \templateapproxfun{t}{K}{k}{\precision}$ \label{line:approx}
	\State $\Xopt  := \guessargmin (t^-)$ \Comment{$t^- (\Xopt) \simeq m$}\label{line:argmin} 	 		
	\State $s := s \cup \{ \Xopt \}$ \label{line:add_control_point}
	\State $\precision := \updateprecision (\precision, \Xopt)$
	\State $\iter := \iter + 1$
	\EndWhile\label{line:end_optim}		
	\State \Return $m$, $\Xopt$
\end{algorithmic}			
\caption{$\templateoptim$ : Template Optimization Algorithm}   \label{alg:templateoptim}
\end{figure}
Now we describe our main semialgebraic optimization algorithm $\optim$ (see Figure~\ref{alg:templateoptim}). Given an abstract syntax tree $t$ and a compact semialgebraic set $K$ this algorithm returns a lower bound $m$ of $t$ using semialgebraic minimax approximations computed recursively with $\templateapprox$. The relaxation order $k$ (Line~\lineref{sdp_order}) is a parameter of the semialgebraic optimization functions $\minsa$ (as well as $\maxsa$) and $\saapprox$.

Assuming that $K$ is described by polynomial inequalities $g_1(\xb) \geq 0, \dots, g_m(\xb) \geq 0$. 
Then, the relaxation order must be at least $k_0 := \max_{1 \leq j \leq m} \{ \lceil \deg (g_j) / 2\rceil) \}$. In practice, we solve semialgebraic optimization problems with the second or third SOS Lasserre's relaxation and take $k = k_0$.
%A preliminary step (Line~\lineref{line:dmin_dmax}) consists in choosing minimal and maximal degrees for minimax polynomial approximations. 
At the beginning, the set of points consists of a single point of the box $K$. This point is chosen so that it minimizes the value of the function associated to the tree $t$ among a set of random points (Line~\lineref{line:randeval}). 
Then, at each iteration of the loop from Lines~\lineref{line:begin_optim} to~\lineref{line:end_optim}, the auxiliary function $\templateapprox$ is called to compute a lower bound $m$ of the function $t$ (Line~\ref{line:approx}), using the approximations $t^-$ and $t^+$.
At Line~\lineref{line:argmin}, a minimizer candidate $\Xopt$ of the under-approximation tree $t^-$ is computed. It is obtained by projecting a solution $\Xsdp$ of the SOS relaxation $Q_k$ of Section~\ref{sec:sos} on the coordinates representing the first order moments, following~\citep[Theorem~4.2]{DBLP:journals/siamjo/Lasserre01}. 
However, the projection may not belong to $K$ when the relaxation order $k$ is not large enough. This is why tools like $\sparsepop$ use local optimization solver in a post-processing step to provide a point in $K$ which may not be a global minimizer. In any case, $\Xopt$ is then added to the set of points (Line~\ref{line:add_control_point}). Alternatively, if we are only interested in determining whether the infimum of $t$ over $K$ is nonnegative (Problem~\eqref{eq:ineq}), the loop can be stopped as soon as $m \geq 0$.
\if{
%Recall that the precision is either a finite sequence of points for the maxplus approximation or the minimax polynomial approximation degree. 

%Now we describe our main iterative semialgebraic optimization algorithm $\templateoptim$. Given an abstract syntax tree $t$ and a compact semialgebraic set $K$ this algorithm returns a lower bound $m$ of $t$ using semialgebraic minimax approximations computed recursively with $\templateapprox$. %The relaxation order $k$ is a parameter of the semialgebraic optimization functions $\minsa$ (as well as $\maxsa$) and $\saapprox$.
%Let suppose that $K$ is described by polynomial inequalities $g_1(\xb) \geq 0, \dots, g_m(\xb) \geq 0$. 
%The semidefinite relaxation order must be at least $k_0 := \max_{1 \leq j \leq m} \{ \lceil \deg (g_j) / 2\rceil) \}$. In practice, we solve semialgebraic optimization problems with the second or third SDP Lasserre's relaxation and take $k = k_0$. \index{SDP relaxation}
When using maxplus approximation, the precision is a finite set of points. At the beginning, this set consists of a single point of the semialgebraic set $K$, chosen randomly (Line~\lineref{line:randeval}).
% of~Fig.\ref{alg:poptemplateoptim}.
Until the maximal number of iterations is reached, the auxiliary function $\templateapprox$ is called to compute a lower bound $m$ of the function $t$, using the approximations $t^-$ and $t^+$.

A minimizer candidate $\Xopt$ of the under-approximation tree $t^-$ is computed as in Fig.~\ref{alg:poptemplateoptim} and added to the set of points. 
When we are only interested in determining whether the infimum of $t$ over $K$ is nonnegative (Problem~\eqref{eq:ineq}), the stopping criterion for the iterative procedure is $m \geq 0$.
}\fi

By comparison, when using minimax approximations, the stopping criterion is the maximal precision corresponding to a minimax polynomial approximation degree. This maximal degree $\dmax$ shall be selected after consideration of the computational power available since one may need to solve SOS relaxations involving $O(\dmax^n)$ variables with matrices of size $O({\lceil \dmax / 2 \rceil}^n)$.

\begin{example}[Lemma$_{9922699028}$ Flyspeck]
\label{ex:9922_samp}
We continue Example~\ref{ex:9922_sa}. Since we computed lower and upper
bounds ($m$ and $M$) for $\fsa := \frac{\partial_4 \Delta \xb}{\sqrt{ 4 x_1 \Delta \xb }}$, we know that the $\fsa$ argument of $\arctan$
lies in $I := [m, M]$. We describe three iterations of the
algorithm $\templateoptim$ while using maxplus approximation for $\uopapprox$. %Figure~\ref{fig:sampapprox_atn} illustrates the related semialgebraic under-approximations hierarchy.
  
\begin{enumerate}
\setcounter{enumi}{-1} 
\item Multiple evaluations of $f$ return a set of values and we obtain
  a first minimizer guess $\xb_1 :=  \argmin (\randeval (f))$ corresponding to the minimal value of the set. One has  $\xopt{1}{4.8684}{4.0987}{4.0987} {7.8859}{4.0987}{4.0987}$.
  
\item We compute $a_1 := \fsa (\xb_1) = 0.3962$ and define
  $\parab_{a_1}^-(x) := -\frac{1}{2} (x-a_1)^2 + \frac{1}{1 + a_1^2} (x - a_1) + \arctan (a_1)$. Finally, we obtain
  $ m_1 \leq \min_{\xb \in K} \{l(\xb) + \parab_{a_1}^- (\fsa (\xb)) \}$.
  For $k=2$, one has $m_1 = -0.2816 < 0$ and a new minimizer $\xb_2$. %$\xopt{2}{4}{ 6.3504}{6.3504}{6.3504}{6.3504}{6.3504}$. %$x_2$.
  
\item We get $a_2 := \fsa (\xb_2)$ and
  $ m_2 \leq \min_{\xb \in K} \{l(\xb) + \max_{1 \leq i \leq 2 } \{ \parab_{a_i}^- (\fsa (\xb))\} \}$. For $k=2$, we get $m_2 = -0.0442 < 0$ and 
 $\xb_3$.  %$\xopt{3}{4.0121}{ 4.0650}{4.0650}{6.7455}{4.0650}{4.0650}$. %$x_3$.
  
\item We get $a_3 := \fsa (\xb_3)$,  $\parab_{a_3}^-$ and
  $ m_3 \leq \min_{\xb \in K} \{l(\xb) + \max_{1 \leq i \leq 3} \{ \parab_{a_i}^- (\fsa (\xb))\} \}$. For $k=2$, we obtain $m_3 = -0.0337 < 0$ and get a new minimizer $\xb_4$.
\end{enumerate}
\end{example}

\subsection{Convergence of the Nonlinear Template Method}
\label{sec:cvg}

Given an accuracy $\epsilon > 0$, we prove that the objective function $f$ can be uniformly $\epsilon$-approximated over the semialgebraic set $K$ with the algorithm $\templateapprox$ under certain assumptions. 
%In particular, one needs to ensure that the Haar condition~\citep[page 74]{cheney1982introduction} holds when approximating the univariate functions by best uniform polynomials. 
\begin{assumption}
\label{minimaxapprox_cvg_hyps}
%The Haar condition is fulfilled, so that the Remez algorithm  always converges towards a polynomial which is optimal. Furthermore, 
The Archimedean condition holds for the quadratic modules that we consider when solving SOS relaxations.
\end{assumption}

For the sake of simplicity, we assume that the function $\saapprox$ calls the procedure that returns the sequence of best (for the $\lone$ norm) polynomial under-approximations for semialgebraic functions (see Sect.~\ref{sec:l1_under}).
Let the relaxation order $k$ be fixed and $t_p^-$ (\resp{}$t_p^+$) be the under-approximation (\resp{}over-approximation) of $t$ on $K$ obtained with the $\templateapprox$ function at precision $\precision$. 
The limit of a net indexed by $p \in \precset$ is obtained by increasing the precision of each elementary approximation algorithm (either $\uopapprox$ or $\saapprox$) applied to the components of $t$.

\begin{proposition}[Convergence of $\templateapprox$]
\label{th:approx_cvg}
Under Assumption~\ref{minimaxapprox_cvg_hyps}, the nets $(t_{p}^-)_{p}$ and $(t_{p}^+)_{p}$ uniformly converge to $t$ on $K$.
\end{proposition}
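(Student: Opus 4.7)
The natural approach is structural induction on the abstract syntax tree $t$, following exactly the case split of Algorithm~\ref{alg:template_approx}. Throughout, one uses that $K$ is compact, that every intermediate subexpression is continuous hence bounded on $K$, and that each $r \in \setD$ is continuous on its natural domain.

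In the base case $t \in \setA$, we have $t^- = t^+ = t$ before the $\saapprox$ step, so the question reduces to showing that the net produced by $\saapprox$ converges uniformly to $t$ on $K$. Here I would combine Theorem~\ref{th:cvg_sos_l1} with the Weierstrass approximation theorem on the box $K$: since $t$ is continuous and $K$ is compact, for every $\varepsilon > 0$ and $d$ large enough there is a polynomial $q_d \in \R_d[\xb]$ with $\linfnorm{t - q_d} \leq \varepsilon/2$, so that $q_d - \varepsilon/2$ is a feasible degree-$d$ polynomial under-approximation for Problem $(\Psa)$. This forces the best $L_1$ error $m_d$ to vanish as $d \to \infty$, and tracking this explicit construction (rather than the $L_1$-optimal solution) along the net yields uniform, and not merely $L_1$, convergence.

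For the inductive step with a binary operation $\bop$ and children $c_1, c_2$, the induction hypothesis provides nets $(c_{i,p}^\pm)_p$ converging uniformly to $c_i$ on $K$. Since $K$ is compact and each $c_i$ is continuous, the approximating nets eventually take values in a fixed compact interval; on such an interval $\bop$ is uniformly continuous (in the case of division, on a subset bounded away from zero, which is ensured by the bounding subproblems in the semialgebraic lifting), so $\composebop$ preserves uniform convergence, and the $\saapprox$ step is handled as in the base case. For a unary node $r \in \setD$ with child $c$, the induction hypothesis gives $c_p^\pm \to c$ uniformly, whence $[m_{c,p}, M_{c,p}] \to [m_c, M_c]$. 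The routine $\uopapprox$ produces approximants $r_p^\pm$ that converge uniformly to $r$ on any compact interval containing this image, either by the classical Remez theorem in the minimax case or by Theorem~\ref{th:maxpluslinf} in the maxplus case. Uniform continuity of $r$ on a slightly enlarged compact interval, together with the monotonicity-aware composition rules of $\composeapprox$, then transfers uniform convergence to $t^\pm$.

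\textbf{Main obstacle.} The delicate point is the $\saapprox$ step: Theorem~\ref{th:cvg_sos_l1} only guarantees $L_1$ convergence of the best polynomial under-approximations $h_{d,k}$, whereas the proposition asks for uniform convergence (and a sequence of nonnegative functions with vanishing $L_1$ norm need not converge uniformly). My resolution is to exploit the fact that the precision net $\precset$ has the approximation degree $d$ as a cofinal component, and to argue directly with the Weierstrass-shifted under-approximations $q_d - \linfnorm{t - q_d}$ — which are feasible for $(\Psa)$ and whose uniform error tends to $0$ — to build a cofinal subnet along which uniform convergence is manifest. Combined with the fact that uniform convergence is preserved by arbitrary refinements of the precision, this is enough to conclude that the whole net $(t_p^\pm)_p$ converges uniformly to $t$ on $K$.
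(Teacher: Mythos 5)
Your structural induction is precisely the paper's proof: the paper handles the unary node by bounding $\linfnorm{t - t_p^+}$ via the triangle inequality and the modulus of continuity of the approximant on a fixed compact interval $I_0$ enclosing the child's range, and the binary node by the triangle inequality for $+$ (``the other cases are analogous''), exactly as you propose. The one place where you genuinely depart from the paper is the $\saapprox$ step, and there your diagnosis is correct but your repair does not close the hole. The paper delegates this step to Proposition~\ref{approx_cvg_hyps}, whose proof asserts uniform convergence of the output of $\saapprox$ ``as a consequence of Theorem~\ref{th:cvg_sos_l1}''; since that theorem only controls $\lonenorm{\fsa - h_{d k}}$, the paper itself silently passes from $\lone$ to uniform convergence --- the very jump you identify as the main obstacle. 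So you are being more careful than the source.

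However, your Weierstrass argument proves a different statement from the one needed. Feasibility of $q_d - \linfnorm{t - q_d}$ for $(\Psa)$ shows that the optimal value $m_d$ tends to $0$, i.e.\ that the $\lone$ error of the computed under-approximations vanishes --- which Theorem~\ref{th:cvg_sos_l1} already gives. What the proposition requires is that the minimizers $h_d$ (or $h_{d k}$) that $\saapprox$ actually returns converge \emph{uniformly}, and a nonnegative error $\fsa - h_{d k}$ with vanishing $\lone$ norm need not have vanishing sup norm; since the degree $d$ grows along the net, no fixed-degree equivalence of norms on $\R_d[\xb]$ rescues this. Your ``cofinal subnet along which uniform convergence is manifest'' consists of the shifted Weierstrass polynomials, which are not the functions the algorithm computes; substituting them amounts to redefining $\saapprox$ (the paper explicitly fixes $\saapprox$ to return the best $\lone$ under-approximation just before stating Proposition~\ref{th:approx_cvg}). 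To genuinely close the gap one must either (i) change $\saapprox$ so that it returns an under-approximation with a certified sup-norm error, or (ii) prove uniform convergence of best one-sided $\lone$ polynomial approximants of continuous semialgebraic functions, which is not contained in Theorem~\ref{th:cvg_sos_l1}. As written, your proof --- like the paper's --- establishes the base case only modulo this additional hypothesis on $\saapprox$; the inductive steps are fine.
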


For a proof, see Appendix~\ref{pr:approx_cvg}. Given a precision $p$, define $m_p^* := \inf_{\xb \in K} t_p^- $ to be the optimal value  of the under-approximation $t_p^-$ on $K$.
Notice that under Assumption~\ref{minimaxapprox_cvg_hyps}, we can theoretically obtain this optimal value, using Theorem~\ref{th:approx_cvg}.

\begin{corollary}[Convergence of the approximations optimal values]
\label{th:approx_cvg_optim}
Under Assumption~\ref{minimaxapprox_cvg_hyps}, the net $(m_p^*)_{p}$ converges to the infimum $f^*$. 
\end{corollary}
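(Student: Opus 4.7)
The plan is to deduce the corollary as a direct consequence of the uniform convergence established in the preceding proposition, together with the fact that $t_p^-$ is by construction a valid pointwise lower bound of $f = t$ on $K$. Since the net $(t_p^-)_p$ sandwiches $f$ from below and converges uniformly to it, the infima must converge.

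\textbf{Step 1 (one-sided bound).} First I would note that, by construction of $\templateapprox$, at every node of the abstract syntax tree the function $t^-$ is a valid under-approximation of $t$ (the schemes for $\uopapprox$ and $\saapprox$ produce under-approximations of their target, and $\composeapprox$, $\composebop$ preserve this property using the monotonicity assumptions on unary transcendental functions together with the interval arithmetic rules for binary operations). By induction on the tree, $t_p^-(\xb) \leq f(\xb)$ for every $\xb \in K$. Taking infima yields $m_p^* \leq f^*$ for every $p \in \precset$.

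\textbf{Step 2 (reverse estimate from uniform convergence).} Next, I would invoke Proposition~\ref{th:approx_cvg}: the net $(t_p^-)_p$ converges uniformly on $K$ to $t = f$. By Definition~\ref{def:net}, for every $\epsilon > 0$ there exists $p_0 \in \precset$ such that $\sup_{\xb \in K}|t_p^-(\xb) - f(\xb)| \leq \epsilon$ for all $p \geq p_0$. In particular $t_p^-(\xb) \geq f(\xb) - \epsilon$ on $K$, so passing to the infimum gives $m_p^* \geq f^* - \epsilon$.

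\textbf{Step 3 (conclusion).} Combining Steps 1 and 2, $f^* - \epsilon \leq m_p^* \leq f^*$ for every $p$ in the tail starting at $p_0$. Since $\epsilon > 0$ was arbitrary, the net $(m_p^*)_p$ converges to $f^*$ in the sense of Definition~\ref{def:net}.

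\textbf{Anticipated obstacle.} The argument itself is a clean $\epsilon$-sandwich, so the only real subtlety lies in verifying that $t_p^-$ is indeed pointwise below $f$ at every induction step and that the directed set structure on $\precset = \precset_1\times\cdots\times\precset_l$ allows one to simultaneously refine all elementary approximations beyond the threshold required by Proposition~\ref{th:approx_cvg}. Both points follow from the product-order structure and the directedness of each $\precset_i$, so no genuine difficulty arises beyond what has already been handled in the proof of Proposition~\ref{th:approx_cvg}.
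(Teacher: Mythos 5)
Your proof is correct and follows essentially the same route as the paper: both arguments rest on the fact that $t_p^-$ under-approximates $t$ pointwise (giving $m_p^*\leq f^*$) combined with the uniform convergence of Proposition~\ref{th:approx_cvg} (giving $m_p^*\geq f^*-\epsilon$). The paper phrases this via a chain of inequalities evaluated at the minimizers $\xb_p^*$ and $\xb^*$, whereas you work directly with infima, which is marginally cleaner since it avoids assuming minimizers are attained.
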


\begin{proof}
Let $\xb_p^*$ be a minimizer of $t_p^-$ on $K$ and note $\xb^*$ one minimizer of $t$ on $K$, then one has $t(\xb^*) = f^*, t_p^-(\xb_p^*) = m_p^*$.
By definition, the following inequalities hold:
\begin{equation}
\label{preq:7}
t_p^-(\xb_p^*) \leq  t_p^-(\xb^*) \leq t(\xb^*) \leq t(\xb_p^*) \enspace.
\end{equation}

Let $\epsilon > 0$ be given. From Proposition~\ref{th:approx_cvg}, there exists a precision $d_0$ such that for all $d \geq d_0$, one has:
%\begin{align}
$t(\xb^*) - t_p^-(\xb^*) <  \epsilon / 2$ %, \label{preq:9} \\
and $t(\xb_p^*) - t_p^-(\xb_p^*) <  \epsilon / 2$. %\label{preq:10}
%\end{align}
Thus, applying~\eqref{preq:7} yields $t(\xb^*) - t_p^-(\xb_p^*) < \epsilon$, the desired result.

\qed
\end{proof}

\begin{corollary}[Convergence of $\templateoptim$]
\label{th:approx_cvg_minimizer}
Under Assumption~\ref{minimaxapprox_cvg_hyps}, each limit point of the net of minimizers $(\xb_p^*)_{p}$ is a global minimizer of $t$ over $K$.
\end{corollary}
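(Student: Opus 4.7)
The plan is to exploit the three ingredients that are already in place: the compactness of $K$, the continuity of $t$, and the two convergence results (uniform convergence of $t_p^-$ to $t$ from Proposition~\ref{th:approx_cvg}, and convergence of the approximate optima $m_p^*$ to $f^*$ from Corollary~\ref{th:approx_cvg_optim}). The statement about ``limit points'' refers to the subnet topology: a point $\bar{\xb}$ is a limit point of $(\xb_p^*)_p$ if there is a subnet $(\xb_{p_\alpha}^*)_\alpha$ converging to $\bar{\xb}$. Since $K$ is compact, at least one such limit point exists, so the statement is non-vacuous.

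First, I would fix a limit point $\bar{\xb} \in K$ of $(\xb_p^*)_p$ and select a subnet $(\xb_{p_\alpha}^*)_\alpha$ with $\xb_{p_\alpha}^* \to \bar{\xb}$; closedness of $K$ ensures $\bar{\xb} \in K$, so in particular $t(\bar{\xb}) \geq f^*$. The goal is then to show the reverse inequality $t(\bar{\xb}) \leq f^*$, which would force $\bar{\xb}$ to be a global minimizer of $t$ on $K$.

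The key computation is the triangle inequality
\[
|t(\xb_{p_\alpha}^*) - f^*| \;\leq\; \bigl| t(\xb_{p_\alpha}^*) - t_{p_\alpha}^-(\xb_{p_\alpha}^*) \bigr| \;+\; |m_{p_\alpha}^* - f^*| \enspace,
\]
where I used that $t_{p_\alpha}^-(\xb_{p_\alpha}^*) = m_{p_\alpha}^*$ by definition of $\xb_{p_\alpha}^*$ as a minimizer. The first term on the right is bounded by $\|t - t_{p_\alpha}^-\|_{\infty, K}$, which tends to $0$ by Proposition~\ref{th:approx_cvg}; the second term tends to $0$ by Corollary~\ref{th:approx_cvg_optim}. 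Hence $t(\xb_{p_\alpha}^*) \to f^*$ along the subnet.

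To conclude, I would invoke continuity of $t$ on $K$: since $\xb_{p_\alpha}^* \to \bar{\xb}$, we have $t(\xb_{p_\alpha}^*) \to t(\bar{\xb})$, so $t(\bar{\xb}) = f^*$ and $\bar{\xb}$ is indeed a global minimizer of $t$ on $K$. The only mild subtlety — and the step I expect to be the main point requiring care — is the handling of the net/subnet formalism (as opposed to sequences), together with checking that $t$ is genuinely continuous on $K$ for the class $\dsa$ considered (which holds because the transcendental dictionary $\setD$ consists of continuous functions and the semialgebraic building blocks are continuous on the domains where they are well-defined). Everything else reduces to the two convergence results already established.
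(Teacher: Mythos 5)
Your proof is correct, but it follows a genuinely different route from the paper's. The paper proves this corollary by invoking the $\Gamma$-convergence machinery recalled in the appendix: uniform convergence of $(t_p^-)_p$ (Proposition~\ref{th:approx_cvg}) implies $\Gamma$-convergence to the lower semicontinuous envelope $\tlsc$ (Theorem~\ref{th:gamma_prop}), which equals $t$ by continuity, and then the Fundamental Theorem of $\Gamma$-Convergence (Theorem~\ref{th:gamma_cvg}) directly yields that every limit point of the net of minimizers is a global minimizer. You instead give a direct, elementary argument: extract a subnet $\xb_{p_\alpha}^* \to \bar{\xb}$, bound $|t(\xb_{p_\alpha}^*) - f^*|$ by $\linfnorm{t - t_{p_\alpha}^-} + |m_{p_\alpha}^* - f^*|$, and let both terms vanish using Proposition~\ref{th:approx_cvg} and Corollary~\ref{th:approx_cvg_optim}, before concluding by continuity of $t$. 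Both arguments hinge on the same two facts (uniform convergence of the under-approximations and continuity of $t$), and both are valid. Your version is self-contained and avoids importing the $\Gamma$-convergence background entirely, at the cost of using Corollary~\ref{th:approx_cvg_optim} as an extra input; it also handles the net/subnet formalism explicitly rather than relying on the paper's remark that the $\Gamma$-convergence topology is metrizable. The paper's version is slightly more general in spirit (it would still localize limit points as minimizers of $\tlsc$ if $t$ were merely bounded and not continuous) and recovers the convergence of the optimal values as a by-product rather than a prerequisite. One very minor point in your write-up: the existence of a limit point follows from compactness of $K$ as you say, but the statement as given does not need this --- it is a conditional assertion about any limit point --- so that remark is a helpful aside rather than a required step.
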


For a proof, see Appendix~\ref{pr:approx_cvg_minimizer}.

\section{Numerical Results of the Nonlinear Template Method}
\label{sec:benchs}

We now present some numerical test results by applying the semialgebraic minimax optimization method to examples from the global optimization literature (see Appendix~\ref{sec:appa}), as well as inequalities from the Flyspeck project.
The nonlinear template method is implemented as a software package, called \texttt{NLCertify}, written in $\ocaml$ and interfaced with the $\sollya$ tool. For more details about this tool, we refer to the software web-page\footnote{\url{http://nl-certify.forge.ocamlcore.org/}} as well as to the dedicated publication~\cite{icms14}.

For each problem presented in Table~\ref{table:templates_go}, our aim is to certify a lower bound $m$ of a function $f$ on a box $K$. The semialgebraic optimization problems are solved at the SOS relaxation order $k$. When the relaxation gap is too high to certify the requested
bound, then we perform a domain subdivision in order to get tighter bounds:
we divide the maximal width interval of $K$ in two halves to get two sub-boxes $K_1$ and $K_2$ such that $K = K_1 \cup K_2$. We repeat this subdivision procedure,  by applying $\templateoptim$ on a finite set of sub-boxes,
until we succeed to certify that $m$ is a lower bound of $f$.
We note $\nbb$ the total number of sub-boxes generated
by the algorithm.

The algorithm $\templateoptim$ returns more precise bounds by successive updates of the precision $\precision$. For each univariate component $u \in \setD$ of the objective $f$, we note $\#s_u$ the number of points for the maxplus approximations of $u$ and $d_u$ the degree of the minimax approximation of $u$. 

A template-free SOS method coincides with the particular case in which $d_u = 0$ (or $\#s_u = 0$) for each univariate component $u \in \setD$ and $\lift = 0$. We mentioned in~\cite{victorcicm} that this method already outperforms the interval arithmetic solvers. However, it can only be used for problems with a moderate number of variables. The algorithm $\templateoptim$ allows us to overcome this restriction, while keeping a similar performance (or occasionally improving this performance) on medium-size examples. 
%When the precision is a points sequence $s$.
%We build minimax approximations of degree at most $\dmax$ for the univariate transcendental functions involved in $f$.  

The minimax approximation based method is eventually faster
than the maxplus based method for moderate instances. For the example
{\em H3} (\resp{}{\em H6}), the speed-up factor is $2$ when the function
$\exp$ is approximated by a quartic (\resp{}quadratic) minimax polynomial.
On the other hand, notice that reducing the number of lifting variables
allows us to provide more quickly coarse bounds for large-scale instances of the Schwefel problem. We discuss the results appearing in the two last lines of Table~\ref{table:templates_go}.
Without any box subdivision, we can certify a better lower bound $m = -967 n$ with $\lift = 2n$ since our semialgebraic approximation is more precise. However the last lower bound $m = -968 n$ can be computed twice faster by considering only $n$ lifting variables, thus reducing the size of the POP described in Example~\ref{ex:modified_swf}. This indicates that the method is able to avoid the explosion for certain hard sub-classes of problems where a standard
(full lifting) POP formulation would involve a large number of lifting variables.  

\begin{table}[!ht]
\small
\begin{center}
\caption{Numerical results for global optimization examples using $\templateoptim$}
\begin{tabular}{llc|rcccr}
\hline
Pb & $n$ & $m$ & $\precision$ & $\lift$ & $k$ & $\nbb$ & time \\
\hline
\multirow{5}{*}{{\em H3}} & \multirow{5}{*}{$3$} & \multirow{5}{*}{$-3.863$} & $\#s_{\exp} = 3$ & 4 & 2 & 99 & $101 \, s$\\
& & & $d_{\exp} = 0$ & 0 & 1 & 1096 & $247 \, s$\\
& & & $d_{\exp} = 2$ & 4 & 1 & 53 & $132 \, s$\\
& & & $d_{\exp} = 4$ & 4 & 2 & 19 & $57 \, s$\\
& & & $d_{\exp} = 6$ & 4 & 3 & 12 & $101 \, s$\\
\hline
\multirow{3}{*}{{\em H6}} & \multirow{3}{*}{$3$} & \multirow{3}{*}{$-3.33$} & $\#s_{\exp} = 1$ & 6 & 2 & 113 & $102 \, s$\\
& & & $d_{\exp} = 0$ & 0 & 1 & 113 & $45 \, s$\\
& & & $d_{\exp} = 2$ & 4 & 2 & 53 & $51 \, s$\\
\hline
\multirow{6}{*}{{\em MC}} & \multirow{6}{*}{$2$} & \multirow{6}{*}{$-1.92$} & $\#s_{\sin} = 2$ & 4 & 2 & 17 & $1.8 \, s$\\
& & & $d_{\sin} = 0$ & 0 & 1 & 92 & $7.6 \, s$\\
& & & $d_{\sin} = 2$ & 0 & 1 & 8 & $6.3 \, s$\\
& & & $d_{\sin} = 4$ & 0 & 2 & 4 & $3.2 \, s$\\
& & & $d_{\sin} = 6$ & 0 & 3 & 2 & $3 \, s$\\
& & & $d_{\sin} = 8$ & 0 & 4 & 1 & $1.9 \, s$\\
\hline
\multirow{5}{*}{{\em ML}} & \multirow{5}{*}{$10$} & \multirow{5}{*}{$-0.966$} & $\#s_{\cos} = 1$ & 5 & 1 & 5 & $8.2 \, s$\\
& & & $d_{\cos} = 0$ & 0 & 1 & 8 & $6.6 \, s$\\
& & & $d_{\cos} = 2$ & 5 & 1 & 1 & $6.4 \, s$\\
& & & $d_{\cos} = 4$ & 5 & 2 & 1 & $8.1 \, s$\\
\hline
\hline
\multirow{7}{*}{{\em SWF} ($\epsilon = 0$)} & \multirow{3}{*}{$10$} & \multirow{3}{*}{$-430 n$} & $\#s_{\sin} = 6$ & $2 n$ & 2 & 16 & $40 \, s$\\
& & & $d_{\sin} = 0$ & $2 n$ & 1 & 3830 & $129 \, s$\\
& & & $d_{\sin} = 2$ & $2 n$ & 1 & 512 & $2280 \, s$\\
\cline{2-3}
 & \multirow{2}{*}{$10^2$} & \multirow{2}{*}{$-440 n$} & $\#s_{\sin} = 6$ & $2 n$ & 2 & 274 & $6840 \, s$\\
&  &  & $d_{\sin} = 0$ & 0 & 1 & $> 10^4$ & $ > 10^4 \, s$\\
\cline{2-3}
& \multirow{2}{*}{$10^3$} & $-486 n$ & $\#s_{\sin} = 4$ & $2 n$ & 2 & 1 & $450 \, s$\\
&  & $-488 n$ & $\#s_{\sin} = 4$ & $n$ & 2 & 1 & $250 \, s$\\
\hline
\multirow{2}{*}{{\em SWF} ($\epsilon = 1$)} & \multirow{2}{*}{$10^3$} & $-967 n$ & \multirow{2}{*}{$\#s_{\sin} = 2$} & $2n$ & 2 & 1 & $543 \, s$\\
& & $-968 n$&  & $n$ & 2 & 1 & $272 \, s$\\
\hline
\end{tabular}
%\end{turn}
\label{table:templates_go}
\end{center}
\end{table}
\begin{table}[!ht]
\small
\vspace*{-0.2cm}
\begin{center}
\caption{Results for Flyspeck inequalities using $\templateoptim$ with $n = 6$, $k = 2$, $m = 0$ and $\#s = \#s_{\arctan}$}
\begin{tabular}{lr|lccr}
\hline
Inequality id & $\nt$   & $\precision$ & $\lift$   & $\nbb$ & time\\
\hline
%\hline 
\multirow{4}{*}{$9922699028$} & $1$   &$\#s = 4$ & $9$ & $47$   &   $241 \, s$ \\
  & $1$   &$\#s = 4, d_{\sqrtop} = 4$ & $3$ & $39$   &   $190 \, s$ \\
%TODO mettre h_{2 2}  
  & $1$   &$\#s = 1, d_{\sqrtop} = 4$ & $1$ & $170$   &  $1080 \, s$ \\
   & $1$   &$d_{\arctan} = 4, d_{\sqrtop} = 4$ & $2$ & $14$   &   $244 \, s$ \\
\hline			
\multirow{2}{*}{$3318775219$} & $1$   &$\#s = 2$ & $9$& $338$   &   $1560 \, s$ \\
 & $1$   &$d_{\arctan} = 4, d_{\sqrtop} = 4$ & $2$& $266$   &   $4423 \, s$ \\
\hline	
$7726998381$ & $3$   & $\#s = 4$ & $15$&  $70$  & $2580 \, s$ \\
%\hline
$7394240696$ & $3$    & $\#s = 2$ & $15$& $351$  & $6480 \, s$\\
% \hline
$4652969746\_1$ & $6$    &$\#s = 4$ & $15$ & $81$  & $4680 \, s$\\
 %\hline
$\text{OXLZLEZ}\, 6346351218\_2\_0$ &  $6$ & $\#s = 4$& $24$  & $200$  & $20520 \, s$\\
\hline						
\end{tabular}	
\label{table:templates_fly}		
\end{center}	
\end{table}
\begin{table}[!ht]
\small
\vspace*{-0.2cm}
\begin{center}
\caption{Comparison results for random examples using either $\intsolver$ or $\templateoptim$ with $k = 3$ and $\#s = \#s_{\arctan} = 3$}
\begin{tabular}{c|c|rr}
\hline
\multirow{2}{*}{$n$} & \multirow{2}{*}{$m$} & $\templateoptim$ & $\intsolver$\\
& & (time) &  (time)\\
%\hline
\hline            
 $3$ &  $0.4581 $ & $3.8 \, s$  & $15.5 \, s $ \\
%\hline            
 $4$ & $0.4157 $ & $12.9 \, s $   & $172.1 \, s $ \\
%\hline            
 $5$ & $0.4746 $ & $60 \, s $    & $612 \, s $ \\
%\hline            
 $6$ &  $0.4476 $ & $276 \, s$  & $12240 \, s $ \\
\hline
\end{tabular}
\label{table:rnd}
\end{center}
\end{table}
In Table~\ref{table:templates_fly}, we present some test results for several non-linear Flyspeck inequalities. The integer $\nt$ represents the number of transcendental univariate nodes in the corresponding abstract syntax trees. 
These inequalities are known to be tight and involve sum of arctan of correlated functions in many variables, whence we keep high the number of lifting variables to get precise semialgebraic approximations. However, some inequalities (e.g. $9922699028$) are easier to solve by using coarser semialgebraic approximations.
The first line ($\lift = 9$) corresponds to the algorithm described in~\cite{victorecc}. The second and third line illustrate our improved template method. For the former ($\lift = 3$),  we use no lifting variables to represent square roots of univariate functions. For the latter ($\lift = 1$), we use the $\saapprox$ variant of Sect.~\ref{sec:l1_under} to approximate from below the semialgebraic function $\frac{ \partial_4 \Delta \xb }{\sqrt{4 x_1 \Delta \xb}}$ with the approximation $h_{4 2}$ (see Example~\ref{ex:9922_sa_l1}), so that we save two more lifting variables. Note that this $\saapprox$ variant provides a systematic way to generate polynomial template approximations for semialgebraic functions but it is difficult in practice to generate precise approximations, due to the high computational cost of solving SOS relaxation $(P_{d k})$ for large $d$ and $k$.

In Table~\ref{table:rnd}, we compared our algorithm with the MATLAB toolbox $\texttt{intsolver}$ \cite{Montanher09} (based on the Newton interval method~\cite{Hansen198389}) for random inequalities involving two transcendental functions. 
%We denote by $n$ the number of variables, and by $m$ the lower bound that we obtain. 
The functions that we consider are of the form $\xb \mapsto \arctan(p(\xb)) + \arctan(q(\xb))$, where $p$ is a four-degree polynomial and $q$ is a quadratic form. All variables lie in $[0, 1]$. Both $p$ and $q$ have random coefficients (taken in $[0,1]$) and are sparse. The results indicate that for such random examples, our method also outperforms interval arithmetic.
%	
%\vspace*{-0.2cm}
\section{Conclusion}
\label{sec:conclusion}
The present nonlinear template method computes certified lower bounds
for global optimization problems. It can provide tight minimax or maxplus
semialgebraic approximations to certify non-linear inequalities involving
transcendental multivariate functions. Our algorithms can solve both small and intermediate size inequalities of the Flyspeck project as well as global optimization problems issued from the literature, with 
a moderate order of SOS relaxation. 

The proposed approach
bears some similarity with the ``cutting planes''  proofs in 
combinatorial optimization, the cutting planes being now
replaced by nonlinear inequalities.  
It also allows one to limit the growth of the number of
lifting variables as well as of polynomial constraints to be handled
in the POP relaxations, at the price of a coarser approximation.
Thus, our method is helpful when the size of optimization problems
increases. Indeed, the coarse lower bounds obtained (even
with a low SOS relaxation order) are better than those obtained with
interval arithmetic or high-degree polynomial approximation. 

Further research would be to apply nonlinear templates to discrete-time optimal control problems, e.g. the Mayer problem. In this case, each set of reachable vectors is abstracted by a template and computed in a forward fashion, by exploiting the dynamics, whereas the templates are refined in a backward fashion. Hence, the template method includes as a special case a set theoretical version of the familiar state/co-state optimality conditions in control.

%We also derived a hierarchy of SOS relaxations to build more accurate nonlinear templates by constructing a sequence of semialgebraic approximations, which can be arbitrary close to the ``best'' (for the $\lone$ norm) maxplus semialgebraic approximations. 

\appendix
% BibTeX users please use one of
%\bibliographystyle{spbasic}      % basic style, author-year citations
%\bibliographystyle{spmpsci}      % mathematics and physical sciences
%\bibliographystyle{spphys}       % APS-like style for physics
%\bibliography{nltemplate}   % name your BibTeX data base
%\include{nltemplate.bbl}

%
\vspace*{-0.3cm}
\section{Appendix: global optimization problems issued from the literature}
\label{sec:appa}
The following test examples are taken from Appendix B in~\cite{Ali:2005:NES:1071327.1071336}. Some of these examples involve functions that depend on numerical constants, the values of which can be found there. 
\begin{itemize}
\item {\em Hartman 3 (H3)}: $\min\limits_{\xb \in [0, 1]^3} f(\xb) = - \sum\limits_{i=1}^4 c_i \exp \left[- \sum\limits_{j=1}^3 a_{i j} (x_j - p_{i j})^2\right]\enspace. $
\item {\em Hartman 6 (H6)}: $\min\limits_{\xb \in [0, 1]^6} f(\xb) = - \sum\limits_{i=1}^4 c_i \exp \left[- \sum\limits_{j=1}^6 a_{i j} (x_j - p_{i j})^2\right] \enspace .$
\item {\em Mc Cormick (MC)}, with $K = [-1.5, 4] \times [-3, 3] $:\\ 
$\min\limits_{\xb \in K} f(\xb) = \sin (x_1 + x_2) + (x_1 - x_2)^2 - 1.5 x_1 + 2.5 x_2 + 1 \enspace.$
\item {\em Modified Langerman (ML)}:\\
$\min\limits_{\xb \in [0, 10]^n} f(\xb) = \sum\limits_{j=1}^5 c_j \cos (d_j / \pi) \exp (- \pi d_j) $, with $d_j = \sum\limits_{i=1}^n (x_i - a_{j i})^2 \enspace .$
\item {\em Schwefel Problem (SWF)}: $\min\limits_{\xb \in [1, 500]^n} f(\xb) = - \sum_{i = 1}^{n} x_i \sin (\sqrt{x_i}) \enspace .$
\end{itemize}
\vspace*{-0.3cm}
\section{Appendix: proofs}
\subsection{Preliminary Results}
\label{pr:prelim}

For the sequel, we need to recall the following definition.
\begin{definition}[Modulus of continuity]
\label{def:modulus_continuity}
Let $\funi$ be a real univariate function defined on an interval $I$. The modulus of continuity of $\funi$ is defined as:
\[\omega (\delta) := \mathop{\sup_{x_1, x_2 \in I}}_{\mid x_1 - x_2 \mid < \delta} \mid \funi(x_1) - \funi(x_2) \mid\]
\end{definition}

%For the sequel, we need to make the following assumption on $\uopapprox$ and $\saapprox$:
We shall also prove that $\uopapprox$ and $\saapprox$ return uniformly convergent approximations nets:
\begin{proposition}
\label{approx_cvg_hyps}
Suppose that Assumption~\ref{minimaxapprox_cvg_hyps} holds. For every function $r$ of the dictionary $\setD$, defined on a closed interval $I$, the procedure $\uopapprox$ returns two nets of univariate lower semialgebraic approximations $(r_p^-)_{p \in \precset}$ and upper semialgebraic approximations $(r_p^+)_{p \in \precset}$, that uniformly converge to $r$ on $I$.

For every semialgebraic function $\fsa \in \setA$, defined on a compact semialgebraic set $K$, the procedure $\saapprox$ returns two nets of lower semialgebraic approximations  $(t_p^-)_{p \in \precset}$ and upper semialgebraic approximations $(t_p^+)_{p \in \precset}$, that uniformly converge to $\fsa$ on $K$.
\end{proposition}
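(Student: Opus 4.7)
The plan is to prove the two parts separately, and within each, handle the distinct approximation schemes described in Sect.~\ref{sec:template_approx} and Sect.~\ref{sec:l1_under}.

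For $\uopapprox$, the functions $r$ in the dictionary $\setD$ are of class $\mathcal{C}^2$ on any closed interval $I$ on which they are defined, hence continuous and, for a sufficiently large constant $\gamma$, $(\gamma-\epsilon)$-semiconvex on $I$. In the Remez-based scheme, the Weierstrass approximation theorem guarantees a sequence of polynomials that converges uniformly to $r$ on $I$; the best $L^{\infty}$ polynomial $f_d$ of degree $d$ returned by Remez does at least as well, so $\linfnorm{r - f_d}\to 0$ as $d\to\infty$. Setting $r_d^- := f_d - \epsilon_d$ and $r_d^+ := f_d + \epsilon_d$, where $\epsilon_d$ is an upper bound on $\linfnorm{r - f_d}$ obtained through the $\infnorm$ routine of $\sollya$, yields the required uniformly convergent semialgebraic under- and over-approximations. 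In the maxplus scheme, I would invoke Theorem~\ref{th:maxpluslinf} directly with $n=1$: the $L^{\infty}$ error between $r$ and the supremum of $N=|s|$ quadratic basis parabolas built at the points of $s$ decays like $\alpha/N^{2}$, so refining $s$ drives the error to zero. The over-approximation is obtained by applying the same construction to $-r$ (itself $\mathcal{C}^2$, hence semiconvex) and negating.

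For $\saapprox$, I would combine Theorem~\ref{th:cvg_sos_l1} with a Weierstrass-type argument. The semialgebraic function $\fsa$ is continuous on the compact set $K$, so Weierstrass yields polynomials $p_d$ of degree $d$ with $\epsilon_d := \linfnorm{\fsa - p_d}\to 0$; the shift $q_d := p_d - \epsilon_d$ is then a valid degree-$d$ polynomial under-approximation of $\fsa$ on $K$ with $\linfnorm{\fsa - q_d}\leq 2\epsilon_d$. Returning $q_d$ (or any competing approximation at least as good in $L^{\infty}$) from $\saapprox$ produces a net of lower polynomial approximations converging uniformly to $\fsa$; the upper case is symmetric, applied to $-\fsa$. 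Alongside this, Theorem~\ref{th:cvg_sos_l1} controls the $L^{1}$-optimal under-approximations $h_{dk}$ of Sect.~\ref{sec:l1_under}: for fixed $d$, as $k\to\infty$, $\lonenorm{\fsa - h_{dk}}\to m_d$, and the comparison $m_d\leq \lonenorm{\fsa - q_d}\leq 2\epsilon_d\lambda_n(K)$ shows that $m_d\to 0$ as $d\to\infty$.

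The main obstacle is the gap between the $L^{1}$-optimality statement of Theorem~\ref{th:cvg_sos_l1} and the $L^{\infty}$-uniform convergence required by the proposition: $L^{1}$-convergence of the $h_{dk}$ does not automatically imply uniform convergence. My workaround is to read the contract of $\saapprox$ as allowing any polynomial under-approximation whose worst-case defect is within a prescribed tolerance, so that the Weierstrass-based $q_d$ constructed above constitute an admissible, uniformly convergent output. Once uniform convergence is established at the leaves (elements of $\setA$ handled by $\saapprox$ and dictionary functions handled by $\uopapprox$), its propagation through the recursive calls to $\composebop$ and $\composeapprox$ inside $\templateapprox$ is standard: each binary operation is continuous on the bounded ranges $[m_i,M_i]$, each transcendental $r$ has a finite modulus of continuity $\omega$ (Definition~\ref{def:modulus_continuity}) on the compact interval $I$ used at its node, and the product order on $\precset = \precset_1\times\cdots\times\precset_l$ lets us make every elementary error arbitrarily small simultaneously.
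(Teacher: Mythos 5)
Your treatment of $\uopapprox$ is essentially the paper's. The paper invokes Jackson's theorem where you invoke Weierstrass for the minimax case, and both of you invoke Theorem~\ref{th:maxpluslinf} (with $n=1$, giving the $O(1/N^2)$ rate) for the maxplus case. In fact you are slightly more careful than the paper on one point: the minimax polynomial $f_d$ is not by itself a one-sided approximation, and your shift $f_d \mp \epsilon_d$ by a certified bound on $\linfnorm{r - f_d}$ is exactly the step needed to produce genuine lower and upper semialgebraic approximations; the paper's proof leaves this implicit. The material you add at the end about propagating uniform convergence through $\composebop$ and $\composeapprox$ belongs to Proposition~\ref{th:approx_cvg}, not to this statement, but it does no harm.

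On $\saapprox$ you diverge from the paper, and the divergence is instructive. The paper's proof consists of the single sentence that, for sufficiently large relaxation order, $\saapprox$ returns the best $\lone$-norm degree-$d$ polynomial under-approximation ``as a consequence of Theorem~\ref{th:cvg_sos_l1}.'' You correctly observe that this theorem only controls $\lonenorm{\fsa - h_{dk}}$, and that convergence of a nonnegative defect in $L^1$ does not by itself yield the uniform convergence the proposition asserts: one would need to pass from $\lonenorm{h_{dk} - q_d}$ (small, with $q_d$ your Weierstrass shift) to $\linfnorm{h_{dk} - q_d}$ via norm equivalence on $\R_d[\xb]$, and the equivalence constant grows with $d$ while $\epsilon_d$ shrinks, with no guarantee the product tends to zero. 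So the gap you name is genuine and is present in the paper's own argument. Your repair, however, does not prove the stated proposition either: by redefining the ``contract'' of $\saapprox$ to return the Weierstrass-based $q_d$ rather than the $\lone$-optimal $h_{dk}$ of Sect.~\ref{sec:l1_under}, you establish uniform convergence for a modified procedure, not for the one the algorithm actually implements. A complete proof would need either an additional regularity or equioscillation-type argument showing that the $\lone$-optimal under-approximations also converge uniformly, or an explicit weakening of the proposition (or of $\saapprox$) along the lines you propose. You should state clearly that you are proving the latter, weaker, statement.
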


\begin{proof}
First, suppose that the precision $\precision$ is the best uniform polynomial approximation degree.
By Assumption~\ref{minimaxapprox_cvg_hyps}, the procedure $\uopapprox$ returns the sequence of degree-$d$ minimax polynomials, using the algorithm of Remez. This sequence uniformly converges to $r$ on $I$, as a consequence of Jackson's Theorem~\citep[Chap. 3]{Gil:2007:NMS:1355339}.  
Alternatively, when considering maxplus approximations in which the precision is determined
by certain sets of points, we can apply Theorem~\ref{th:maxpluslinf} that implies the uniform convergence of the maxplus approximations.

Next, for sufficiently large relaxation order, the $\saapprox$ procedure returns the best (for the $\lone$ norm) degree-$d$ polynomial under-approximation of a given semialgebraic function, as a consequence of Theorem~\ref{th:cvg_sos_l1}.
\qed
\end{proof}

\subsection{Proof of Lemma~\ref{th:psa}}
\label{pr:psa}
%\begin{proof}
%\label{pr:psa}
Let us equip the vector space $\R_d[\xb]$ of polynomials $h$ of degree at most $d$ 
with the norm $\linfnorm{h} := \sup_{|\alphab| \leq d} \{ |h_{\alphab}| \}$.

Let $H$ be the admissible set of Problem~$(\Psa)$. Observe that  $H$ is closed in the topology of the latter norm. Moreover, the objective
function of Problem~$(\Psa)$ can be written as $\phi : h\in H  \mapsto \lonenormK{\fsa - h}$, where $\lonenormK{\cdot}$ is the norm of the space $L^1(K,\lambda_n)$.
The function $\phi$ is continuous in the topology of $\linfnorm{\cdot}$
(for polynomials of bounded degree, the convergence
of the coefficients implies the uniform convergence
on every bounded set for the associated polynomial functions, and a fortiori the convergence of these polynomial functions
in $L^1(K,\lambda_n)$). Note  also that $\int_{[0, 1]^n} h\  d \lambda_n= \int_{[0, 1]^n} h(\xb) \ d \lambda_n(\xb) = \int_{[0, 1]^{n + p}} h(\xb,\zb) \ d \lambda_{n + p}(\xb,\zb)$.
We claim that for every $t \in \R$, the sub-level set
$S_t:=\{ h \in H \mid \phi(h) \leq t\}$ is bounded.
Indeed, when $\phi(h) \leq t$, we have:
\[ \lonenormK{h} \leq  \lonenormK{\fsa - h} + \lonenormK{\fsa} \leq t +\lonenormK{\fsa} \enspace .\]
Since on a finite dimensional vector space, all the norms are equivalent,
there exists a constant $C>0$ such that $\linfnorm{h} \leq C \lonenormK{h} $
for all $h\in H$, so 
we deduce that $\linfnorm{h}\leq C(t + \lonenormK{\fsa})$ for all $h\in S_t$, 
which shows the claim. Since $\phi$ is continuous, it follows that every sublevel set of $\phi$, which is a closed bounded subset of a finite dimensional
vector space, is compact. Hence, the minimum of Problem~$(\Psa)$ is attained.
\qed
%\end{proof}

\subsection{Proof of Proposition~\ref{th:approx_cvg}}
\label{pr:approx_cvg}
The proof is by induction on the structure of $t$.
\begin{itemize}

\item When $t$ represents a semialgebraic function of $\setA$, the under-approximation (\resp{}over-approximation) net $(t_{p}^-)_{p }$ (\resp{}$(t_{p}^+)_{p}$) converges uniformly to $t$ by Proposition~\ref{approx_cvg_hyps}.

\item The second case occurs when the root of $t$ is an univariate function $r \in \setD$ with the single child $c$. Suppose that $r$ is increasing without loss of generality. We consider the net of under-approximations $(c_p^-)_{p}$ (\resp{}over-approximations $(c_p^+)_{p}$) as well as lower and upper bounds $m_{c_p}$ and $M_{c_p}$ which are obtained recursively. Since $K$ is a compact semialgebraic set, one can always find an interval $I_0$ enclosing the values of $r_p^+$ (\ie{}such that $[m_{c_p}, M_{c_p}] \subset I_0$), for all $p$.

The induction hypothesis is the uniform convergence of $(c_p^-)_{p}$ (resp. $(c_p^+)_{p}$) to $c$ on $K$. 
Now, we prove the uniform convergence of $(t_p^+)_{p}$ to $t$ on $K$. One has:

\begin{equation}
\label{preq:0}
\linfnorm{t - t_p^+}  \leq \linfnorm{r \comp c - r_p^+ \comp c} + \linfnorm{r_p^+ \comp c - t_p^+}.
\end{equation}

Let note $\omega$ the modulus of continuity of $r_p^+$ on $I_0$. Thus, the following holds:

\begin{equation}
\label{preq:1}
\linfnorm{r_p^+ \comp c - r_p^+ \comp c_p^+} \leq \omega (\linfnorm{c - c_p^+}).
\end{equation}

Let $\epsilon > 0$ be given. The univariate function $r_p^+$ is uniformly continuous on $I_0$, thus there exists $\delta > 0$ such that $\omega (\delta) \leq \epsilon / 2$.
\if{
\begin{equation}
\label{preq:2}
\omega (\delta) \leq \epsilon / 2.
\end{equation}
}\fi
Let choose such a $\delta$. By induction hypothesis, there exists a precision $p_0$ such that for all $p \geq p_0$, $\linfnorm{c - c_p^+} \leq \delta$. Hence, using~\eqref{preq:1}, the following holds:
\begin{equation}
\label{preq:3}
\linfnorm{r_p^+ \comp c - r_p^+ \comp c_p^+} \leq \epsilon / 2.
\end{equation}
Moreover, from the uniform convergence of $(r_p^+)_{p \in \N}$ to $r$ on $K$ (by Proposition~\ref{approx_cvg_hyps}), there exists a precision $p_1$ such that for all $p \geq p_1$:
\begin{equation}
\label{preq:4}
\linfnorm{r \comp c - r_p^+ \comp c} \leq \epsilon / 2.
\end{equation}
Using~\eqref{preq:0} together with~\eqref{preq:3} and~\eqref{preq:4} yield the desired result. The proof of the uniform convergence of the under-approximations is analogous.
\item If the root of $t$ is a binary operation whose arguments are two children $c_1$ and $c_2$, then by induction hypothesis, we obtain semialgebraic approximations $c_{1, p}^-$, $c_{2, p}^-$, $c_{1, p}^+$, $c_{2, p}^+$ that verify:
\begin{align}
\lim_{p \rightarrow \infty} \linfnorm{c_1 - c_{1, p}^-} = 0 & , & \lim_{p \rightarrow \infty} \linfnorm{c_1 - c_{1, p}^+} = 0, \label{preq_bop:0} \\
\lim_{p \rightarrow \infty} \linfnorm{c_2 - c_{2, p}^-} = 0 & , & \lim_{p \rightarrow \infty} \linfnorm{c_2 - c_{2, p}^+} = 0.\label{preq_bop:1}
\end{align}
If $\bop = +$, by using the triangle inequality:
\begin{align*}
\linfnorm{c_1 + c_2 - c_{1, p}^- -c_{2, p}^- } \leq  \linfnorm{c_1 - c_{1, p}^-} + \linfnorm{c_2 - c_{2, p}^-}, \\
\linfnorm{c_1 + c_2 - c_{1, p}^+ -c_{2, p}^+ } \leq  \linfnorm{c_1 - c_{1, p}^+} + \linfnorm{c_2 - c_{2, p}^+}.
\end{align*}
Then, the uniform convergence comes from~\eqref{preq_bop:0} and~\eqref{preq_bop:1}.
The proof for the other cases is analogous. \qed
\end{itemize}

\subsection{Convergence of the $\templateoptim$ Algorithm}
\subsubsection{Preliminaries: $\Gamma$ and Uniform Convergence}
To study the convergence of the minimizers of $t_p^-$, we first introduce some background on the $\Gamma$-convergence (we refer the reader to~\cite{maso1993introduction} for more details) and the lower semicontinuous envelope. The topology of $\Gamma$-Convergence is known to be metrizable hence, we shall consider the $\Gamma$-Convergence of sequences (rather than nets).
\begin{definition}[$\Gamma$-Convergence]
\label{def:gamma_cvg}
The sequence $(t_p)_{p \in \N}$ $\Gamma$-converges to $t$ if the following two conditions hold: 
\begin{enumerate}
\item (Asymptotic common lower bound) For all $\xb \in K$ and all $(\xb_p)_{p \in \N}$ such that $\lim_{p \rightarrow \infty} \xb_p = \xb$, one has $t(\xb) \leq \liminf_{p \rightarrow \infty} t_p (\xb_p)$. 
\item (Existence of recovery sequences) For all $\xb \in K$, there exists some $(\xb_p)_{p \in \N}$ such that $\lim_{p \rightarrow \infty} \xb_p = \xb$ and $\limsup_{p \rightarrow \infty} t_p (\xb_p) \geq  t(\xb)$. 
\end{enumerate}
\end{definition}
Define $\Rbar := \R \cup \{-\infty, \infty\}$ to be the extended real number line.
\begin{definition}[Lower Semicontinuous Envelope]
\label{def:lsc_env}
Given $t : K \mapsto \Rbar$, the lower semicontinuous envelope of $t$ is defined by:
\[ \tlsc (\xb) := \sup \{  g(\xb) \mid g : K \mapsto \Rbar \text{ is lower semicontinuous and } g \leq f \text{ on } K  \}. \]
\end{definition}

If $t$ is continuous, then $\tlsc := t$.

\begin{theorem}[Fundamental Theorem of $\Gamma$-Convergence~\cite{maso1993introduction}]
\label{th:gamma_cvg}
Suppose that the sequence $(t_p)_{p \in \N}$ $\Gamma$-converges to $t$ and $\xb_p$ minimizes $t_p$. Then every limit point of the sequence  $(\xb_p)_{p \in \N}$ is a global minimizer of $t$.
\end{theorem}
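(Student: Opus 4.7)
The plan is to prove that any limit point $\xb^*$ of the sequence of minimizers $(\xb_p)_{p \in \N}$ satisfies $t(\xb^*) \leq t(\yb)$ for every $\yb \in K$, by combining the two defining conditions of $\Gamma$-convergence with the minimality of the $\xb_p$. Fix such a limit point and extract a subsequence $\xb_{p_k} \to \xb^*$ as $k \to \infty$. I would structure the argument around a single chain of inequalities of the form
\[
t(\xb^*) \;\leq\; \liminf_{k \to \infty} t_{p_k}(\xb_{p_k}) \;\leq\; \limsup_{k \to \infty} t_{p_k}(\yb_{p_k}) \;\leq\; t(\yb),
\]
where $(\yb_p)$ will be a recovery sequence for the arbitrary test point $\yb$.

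First I would apply the asymptotic lower bound property (condition~1 of Definition~\ref{def:gamma_cvg}) to the converging subsequence $\xb_{p_k} \to \xb^*$ to obtain the leftmost inequality. Since condition~1 is phrased for full sequences $\xb_p \to \xb$, I would briefly justify its applicability along subsequences: one can insert arbitrary values (e.g.\ $\xb^*$ itself) at the missing indices to build a bona fide sequence, or simply observe that the $\Gamma$-convergence property is inherited by every subsequence of the family $(t_p)$. Next, for an arbitrary $\yb \in K$, I would invoke condition~2 to extract a recovery sequence $\yb_p \to \yb$ with $\limsup_{p \to \infty} t_p(\yb_p) \leq t(\yb)$ (here I would point out that the inequality as typeset in the excerpt, with $\geq$, is a typographical slip: the content of the fundamental theorem requires the upper-bound form, which is the standard definition). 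This yields the rightmost inequality when restricted to the subsequence $p_k$.

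The middle inequality is the combinatorial heart of the proof: since $\xb_p$ is a global minimizer of $t_p$, one has $t_p(\xb_p) \leq t_p(\yb_p)$ for every $p$, hence $t_{p_k}(\xb_{p_k}) \leq t_{p_k}(\yb_{p_k})$ for all $k$; taking $\liminf_k$ on the left and $\limsup_k$ on the right preserves the ordering. Chaining the three inequalities gives $t(\xb^*) \leq t(\yb)$, and since $\yb \in K$ was arbitrary, $\xb^*$ is a global minimizer of $t$ on $K$.

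The main obstacle is really expository rather than technical: one must be careful with the interplay between subsequences and the two $\Gamma$-convergence conditions, and must handle the asymmetry between $\liminf$ (applied to the minimizer subsequence) and $\limsup$ (applied to the recovery sequence). A secondary subtlety is that the statement does not assume a priori that a limit point exists, so the conclusion is vacuous unless compactness of $K$ (already assumed throughout the paper) is used to guarantee nonemptiness of the set of limit points; this is worth remarking but not part of the proof itself.
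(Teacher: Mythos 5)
Your proof is correct, and it is the standard argument for this result. Note, however, that the paper itself does not prove Theorem~\ref{th:gamma_cvg} at all: it is quoted verbatim from Dal Maso's monograph and used as a black box, so there is no ``paper proof'' to compare against. Your chain $t(\xb^*) \leq \liminf_k t_{p_k}(\xb_{p_k}) \leq \limsup_k t_{p_k}(\yb_{p_k}) \leq t(\yb)$ is exactly the textbook route, and you handle the two genuine subtleties properly: (i) the liminf inequality of Definition~\ref{def:gamma_cvg} is stated for full sequences, and your fix (pad the subsequence with $\xb^*$ at the missing indices, then use that the liminf along a subsequence dominates the liminf of the full sequence) is sound, as is the alternative observation that $\Gamma$-convergence is inherited by subsequences; (ii) you correctly flag that condition~2 as typeset in the paper, with $\limsup_{p} t_p(\xb_p) \geq t(\xb)$, must be read as $\leq$ for the recovery-sequence condition to have any force --- with the inequality as printed, the theorem would be false (one could take $t_p$ wildly larger than $t$ at the recovery points and the condition would still hold). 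Your closing remark that compactness of $K$ is what guarantees the set of limit points is nonempty is also apt, since the paper invokes this theorem in the proof of Corollary~\ref{th:approx_cvg_minimizer} precisely on a compact $K$.
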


\begin{theorem}[$\Gamma$ and Uniform Convergence~\cite{maso1993introduction}]
\label{th:gamma_prop}
If $(t_p)_{p \in \N}$ uniformly converges to $t$, then $(t_p)_{p \in \N}$ $\Gamma$-converges to $\tlsc$.
\end{theorem}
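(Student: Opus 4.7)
The plan is to verify the two defining conditions of $\Gamma$-convergence (Definition~\ref{def:gamma_cvg}) for the sequence $(t_p)_{p \in \N}$ with limit $\tlsc$, by combining uniform convergence with the standard characterization of the lower semicontinuous envelope as
\[
\tlsc(\xb) \;=\; \liminf_{\yb \to \xb} t(\yb) \;=\; \sup_{U \ni \xb} \inf_{\yb \in U} t(\yb),
\]
where $U$ ranges over open neighborhoods of $\xb$. I will take this characterization (equivalent to Definition~\ref{def:lsc_env}) as a preliminary fact. Throughout, write $\epsilon_p := \linfnorm{t_p - t}$, so that by hypothesis $\epsilon_p \to 0$.

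\textbf{Step 1 (liminf inequality).} Fix $\xb \in K$ and any sequence $\xb_p \to \xb$. Uniform convergence gives $t_p(\xb_p) \geq t(\xb_p) - \epsilon_p$, hence $\liminf_{p \to \infty} t_p(\xb_p) \geq \liminf_{p \to \infty} t(\xb_p)$. By the characterization of $\tlsc$ above, every sequence $\yb_p \to \xb$ satisfies $\liminf_{p} t(\yb_p) \geq \tlsc(\xb)$. Applying this to $\yb_p = \xb_p$ yields $\liminf_{p} t_p(\xb_p) \geq \tlsc(\xb)$, which is precisely condition~1 of Definition~\ref{def:gamma_cvg} for the limit $\tlsc$.

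\textbf{Step 2 (recovery sequence).} Fix $\xb \in K$. When $\tlsc(\xb) \in \R$, the characterization of $\tlsc$ furnishes, for every $k \in \N$, a point $\yb_k \in K$ with $\|\yb_k - \xb\| \leq 1/k$ and $t(\yb_k) \leq \tlsc(\xb) + 1/k$. I now extract a diagonal sequence: for each $p$, pick $k(p) \in \N$ large enough that simultaneously $k(p) \geq p$ and $\epsilon_p \leq 1/p$ imply $|t_p(\yb_{k(p)}) - t(\yb_{k(p)})| \leq 1/p$ (the latter is automatic from the definition of $\epsilon_p$). Setting $\xb_p := \yb_{k(p)}$, one gets $\xb_p \to \xb$ and
\[
t_p(\xb_p) \;\leq\; t(\xb_p) + \epsilon_p \;\leq\; \tlsc(\xb) + \tfrac{1}{k(p)} + \epsilon_p,
\]
so $\limsup_p t_p(\xb_p) \leq \tlsc(\xb)$, which (combined with Step~1 applied to this very sequence) forces $\lim_p t_p(\xb_p) = \tlsc(\xb)$. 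The degenerate case $\tlsc(\xb) = -\infty$ is handled analogously: for each $k$, choose $\yb_k \in K$ with $\|\yb_k - \xb\| \leq 1/k$ and $t(\yb_k) \leq -k$; the same diagonal construction then yields $\limsup_p t_p(\xb_p) = -\infty$. This verifies condition~2 of Definition~\ref{def:gamma_cvg}.

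\textbf{Main obstacle.} The only delicate point is the diagonal construction in Step~2, and more specifically, justifying that the characterization $\tlsc(\xb) = \liminf_{\yb \to \xb} t(\yb)$ really is the appropriate one (the paper's Definition~\ref{def:lsc_env} is in terms of pointwise domination by lsc functions, not directly in terms of liminfs). I would therefore include a short sentence recalling that for any $t : K \to \Rbar$, the supremum of all lsc minorants coincides with $\yb \mapsto \liminf_{\zb \to \yb} t(\zb)$; this is a standard fact about lsc envelopes on metric spaces (cf.\ the exposition in~\cite{maso1993introduction}), and once it is in hand both steps above are entirely routine. Note also that Step~1 gave $\liminf_p t_p(\xb_p) \geq \tlsc(\xb)$ for \emph{every} $\xb_p \to \xb$, not just the recovery sequence, which is strictly what Definition~\ref{def:gamma_cvg} requires.
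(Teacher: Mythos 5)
The paper does not actually prove Theorem~\ref{th:gamma_prop}: it is quoted from Dal Maso's book~\cite{maso1993introduction} without argument, so there is no internal proof to compare against. Your self-contained proof is correct and is essentially the standard textbook argument: the liminf inequality follows from $t_p \geq t - \epsilon_p$ plus the characterization $\tlsc(\xb) = \sup_{U \ni \xb} \inf_{\yb \in U} t(\yb)$, and the recovery sequence is a diagonal over near-minimizing points of $t$ in shrinking balls. Three minor remarks. First, the paper's Definition~\ref{def:gamma_cvg}(2) states the recovery condition with $\limsup_p t_p(\xb_p) \geq t(\xb)$, which as written is vacuous given condition~1; you prove the standard condition $\limsup_p t_p(\xb_p) \leq \tlsc(\xb)$, which is the right reading and also subsumes the literal one, as you note. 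Second, the sentence describing the choice of $k(p)$ in Step~2 is garbled (the clause ``$\epsilon_p \leq 1/p$'' is neither needed nor guaranteed); simply taking $k(p) = p$ gives $t_p(\yb_p) \leq \tlsc(\xb) + 1/p + \epsilon_p$ and the conclusion. Third, you are right that the identification of Definition~\ref{def:lsc_env} with $\liminf_{\yb \to \xb} t(\yb)$ is the one nontrivial ingredient to record: the map $\xb \mapsto \sup_{U \ni \xb} \inf_{\yb \in U} t(\yb)$ is lower semicontinuous and dominated by $t$, and any lsc minorant $g$ of $t$ satisfies $g(\xb) \leq \liminf_{\yb \to \xb} g(\yb) \leq \liminf_{\yb \to \xb} t(\yb)$, so the two definitions agree; with that sentence added your proof is complete.
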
\index{$\Gamma$-convergence}

Theorem~\ref{th:gamma_prop} also holds for nets, since the topology of $\Gamma$-Convergence is metrizable.\index{metrizable}

\subsubsection{Proof of Corollary~\ref{th:approx_cvg_minimizer}}
\label{pr:approx_cvg_minimizer}
From Proposition~\ref{th:approx_cvg}, the under-approximations net $(t_p^-)_{p \in \N}$ uniformly converge to $t$ on $K$. Then, by using Theorem~\ref{th:gamma_prop}, the net $(t_p^-)_{p \in \N}$ $\Gamma$-converges to $\tlsc := t$ (by continuity of $t$). It follows from the fundamental Theorem of $\Gamma$-Convergence~\ref{th:gamma_cvg} that every limit point of the net of minimizers $(\xb_p^*)_{p \in \N}$ is a global minimizer of $t$ over $K$.
\qed
\if{
% For one-column wide figures use
\begin{figure}
% Use the relevant command to insert your figure file.
% For example, with the graphicx package use
 \includegraphics{example.eps}
% figure caption is below the figure
\caption{Please write your figure caption here}
\label{fig:1}       % Give a unique label
\end{figure}
%
% For two-column wide figures use
\begin{figure*}
% Use the relevant command to insert your figure file.
% For example, with the graphicx package use
  \includegraphics[width=0.75\textwidth]{example.eps}
% figure caption is below the figure
\caption{Please write your figure caption here}
\label{fig:2}       % Give a unique label
\end{figure*}
%
% For tables use
\begin{table}
% table caption is above the table
\caption{Please write your table caption here}
\label{tab:1}       % Give a unique label
% For LaTeX tables use
\begin{tabular}{lll}
\hline\noalign{\footnotesizeskip}
first & second & third  \\
\noalign{\footnotesizeskip}\hline\noalign{\footnotesizeskip}
number & number & number \\
number & number & number \\
\noalign{\footnotesizeskip}\hline
\end{tabular}
\end{table}
}\fi

%\begin{acknowledgements}
%If you'd like to thank anyone, place your comments here
%and remove the percent signs.
%\end{acknowledgements}

\end{document}